\tikzset{
    nat/.style  = {-implies,double, double equal sign distance}
}
\theoremstyle{plain}
\newtheorem{theorem}{Theorem}[section]
\newtheorem{proposition}[theorem]{Proposition}
\newtheorem{lemma}[theorem]{Lemma}
\newtheorem{corollary}[theorem]{Corollary}
\newtheorem*{claim*}{Claim}
\theoremstyle{definition}
\newtheorem{example}[theorem]{Example}
\newtheorem{remark}[theorem]{Remark}
\newenvironment{tikzcdnat}[1][]{\begin{tikzcd}[arrows=nat,#1]}{\end{tikzcd}}
\newcommand\one{\mathbf 1}
\newcommand\A{\mathcal A}
\newcommand\B{\mathcal B}
\newcommand\C{\mathbb C}
\newcommand\D{\mathbb D}
\newcommand\DC{\mathbb D}   %
\newcommand\Id{\mathrm{Id}}
\newcommand\id{\mathrm{id}}
\renewcommand\epsilon{\varepsilon}
\newcommand\fl{^\dagger} %
\newcommand\colim{\mathop{\mathrm{colim}}}
\newcommand\ol{\overline}
\newcommand\Graph{\mathbf{Graph}}
\newcommand\GraphFin{\mathbf{Graph}_{\mathsf{fin}}}
\newcommand\Cli{\mathbf{Cli}}   %
\newcommand\DClip{\DC_{\Cli_p}}  %
\newcommand\Rs{\mathcal{R}(\sigma)}
\newcommand\R{\mathbb R}
\newcommand\EM{\mathtt{EM}}
\newcommand\Ek{\mathbb{E}_k}
\newcommand\Pk{\mathbb{P}_k}
\newcommand\PRk{\mathbb{PR}_k}
\newcommand\nattrans{\Rightarrow}
\newcommand\qtq[1]{\quad\text{#1}\quad}
\newcommand{\ef}{Ehrenfeucht--Fra{\"i}ss{\'e}}
\newcommand{\compbased}{com\-po\-nent-based}
\newcommand{\decomposable}{componental}
\newenvironment{prf}{\begin{proof}}{\end{proof}}
\newcommand\appendixmentioned[1]{}
\begin{document}

\title{Discrete density comonads and graph parameters}

\thanks{
    Accepted at CMCS 2022. The final publication will be available at Springer. \\
    \indent
    \ Supported by EPSRC grant EP/T00696X/1: Resources and Coresources: a junction between categorical semantics and descriptive complexity}

\author{Samson Abramsky} %
\address{Department of Computer Science, University College London}
\email{s.abramsky@ucl.ac.uk}
\author{Tom\'a\v s Jakl} %
\address{Department of Computer Science and Technology, University of Cambridge}
\email{tomas.jakl@cl.cam.ac.uk}
\urladdr{https://tomas.jakl.one}
\author{Thomas Paine}
\address{Department of Computer Science, University of Oxford}
\email{thomas.paine@stcatz.ox.ac.uk}

\maketitle

\begin{abstract}
Game comonads have brought forth a new approach to studying finite model theory
categorically. By representing model comparison games semantically as comonads,
they allow important logical and combinatorial properties to be exressed in terms
of their Eilenberg-Moore coalgebras. As a result, a number of  results
from finite model theory, such as preservation theorems and homomorphism counting theorems, have been
formalised and parameterised by comonads, giving rise to new results simply by 
varying the comonad.

In this paper we study the limits of the comonadic approach in the
combinatorial and homomorphism-counting aspect of the theory, regardless of whether any
model comparison games are involved. We show that any standard graph
parameter has a corresponding comonad, classifying the same class. This comonad
is constructed via a simple Kan extension formula, making it the initial
solution to this problem and, furthermore, automatically admitting a
homomorphism-counting theorem.
\keywords{density comonads  \and graph parameters \and Lov\'asz' theorem.}
\end{abstract}

\section{Introduction}

An important feature of the emerging theory of game comonads \cite{abramskydawarwang2017pebbling,abramskyshah2021relating,conghailedawar2021game,abramskymarsden2021comonadic} is that game comonads classify a number of important classes of finite relational structures. We say that a comonad $\C$ \emph{classifies a class} $\Delta$ if a finite structure $A$ is in the class $\Delta$ precisely when $A$ admits a $\C$-coalgebra. For example, the Ehrenfeucht–Fra\"iss\'e comonad $\Ek$ classifies the structures of tree-depth ${\leq}k$ and, similarly, the pebbling comonad $\Pk$ classifies tree-width ${<}k$.

In this paper we study the theoretical limits of the comonadic approach. In particular, we aim to identify classes of structures which can be classified by comonads. We can readily predict two necessary requirements. Since the problem is stated in the language of category theory, we know that the classes of structures classified by comonads need to be closed under isomorphisms and, moreover, since finite coalgebras are closed under binary coproducts $+$ (i.e.\ disjoint unions), this must also be the case for the classes classifiable by comonads.

In fact, we show that  one further, very natural requirement suffices in order to be able to classify a class of structures by a comonad. It suffices to assume that the class is closed under connected substructures. 

We define a class $\Delta$ of finite relational structures or graphs to be \emph{\compbased{}} if it is closed under 
\begin{itemize}
\item isomorphisms,
\item finite coproducts, and
\item summands (i.e. if $A + B$ is in $\Delta$ then so are $A$ and $B$).
\end{itemize}

We can now state our first main result.
\begin{theorem}\label{t:class-classification}
    Any \emph{\compbased{}} class $\Delta$ can be classified by a comonad.
\end{theorem}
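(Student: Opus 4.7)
The strategy is to realise $\DC_\Delta$ as a density (left Kan extension) comonad and verify that its Eilenberg--Moore coalgebras pick out exactly the structures of $\Delta$. Let $J \colon \Delta \hookrightarrow \Rs$ denote the inclusion of the full subcategory whose objects are the structures of $\Delta$; the plan is to set
\[
    \DC_\Delta \;=\; \mathrm{Lan}_J J,
\]
computed pointwise by the standard colimit formula $\DC_\Delta A = \colim\bigl((J \downarrow A) \to \Delta \xrightarrow{J} \Rs\bigr)$. Cocompleteness of $\Rs$ guarantees that this colimit exists, and general Kan-extension theory equips $\DC_\Delta$ with the structure of a comonad whose counit $\epsilon_A \colon \DC_\Delta A \to A$ is the canonical cocone induced by the morphisms $f \colon S \to A$ ranging over the comma category.

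For the easy direction of the classification, if $A \in \Delta$ then the pair $(A, \id_A)$ sits in $(J \downarrow A)$ and its coprojection $\iota \colon A \to \DC_\Delta A$ satisfies $\epsilon_A \circ \iota = \id_A$ by construction; a routine diagram chase using the universal property of the left Kan extension shows that $\iota$ is also compatible with the comultiplication, so $(A, \iota)$ is a bona fide $\DC_\Delta$-coalgebra.

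For the converse, suppose $\alpha \colon A \to \DC_\Delta A$ is a coalgebra. I would decompose $A = C_1 + \cdots + C_n$ into connected components, noting that by coproduct-closure it suffices to show each $C_i \in \Delta$. The central claim is that because each $C_i$ is a finite connected structure, its image under $\alpha$ factors through the coprojection attached to a \emph{single} pair $(S_i, f_i \colon S_i \to A)$ in the density diagram; once that is established, the identity $\epsilon_A \circ \alpha = \id_A$ forces the factoring map $C_i \to S_i$ to exhibit $C_i$ as a coproduct summand of $S_i$ in $\Rs$, whereupon summand-closure of $\Delta$ delivers $C_i \in \Delta$.

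The principal obstacle lies in this single-pair factorisation step, because $(J \downarrow A)$ is not filtered and so connectedness of $C_i$ does not automatically suffice. I would address this by unpacking $\DC_\Delta A$ sort-wise: an element is an equivalence class $[S, f, s]$ of triples under the relations $(S, f \circ g, s) \sim (S', f, g(s))$ for $g \colon S \to S'$ in $\Delta$, and $\alpha$ amounts to a coherent choice of such classes. Connectedness of $C_i$ then lets one walk along its edges and successively amalgamate the chosen witnesses --- using the morphisms of $\Delta$ appearing in the equivalence and, crucially, the closure of $\Delta$ under coproducts to combine witnesses sitting in different structures --- into one $S_i \in \Delta$ with $C_i$ realised as a summand. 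This is the step where the componental hypothesis is genuinely exploited; the remaining comonad axioms and initiality among such classifying comonads follow from the universal property of $\mathrm{Lan}_J J$.
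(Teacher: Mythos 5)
The construction you propose --- the density comonad of the \emph{full} subcategory inclusion $J\colon \Delta \hookrightarrow \Rs$ --- is not the one that works, and the obstacle you flag in your final paragraph is fatal rather than merely technical. Because $(J\downarrow A)$ has non-identity morphisms, $\DC_\Delta(A)$ is a genuine colimit, i.e.\ a quotient of the coproduct $\coprod_{(S,f)} S$, and the identifications can collapse the comonad entirely. Concretely, take $\Delta$ to be the \compbased{} class of graphs of maximum degree at most $1$, i.e.\ finite disjoint unions of copies of the one-vertex graph $K_1$ and the one-edge graph $K_2$. The pair $\{K_1,K_2\}$ is dense in $\Graph$: the canonical diagram of all homomorphisms from $K_1$ and $K_2$ into a graph $G$ has $G$ itself as its colimit, and one checks directly that enlarging the diagram to all of $\Delta$ does not change this. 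Hence $\mathrm{Lan}_J J \cong \Id$, the counit is an isomorphism, and \emph{every} finite graph carries a coalgebra, while $\Delta$ contains almost no graphs. So for your comonad the converse direction is simply false, and no amalgamation-of-witnesses argument can repair it.

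The paper's fix is precisely to destroy the morphisms in the indexing category: one takes $\A$ to be a \emph{discrete} category containing one representative of each isomorphism class of \emph{connected} structures in $\Delta$, and $M\colon \A \to \Rs$ the inclusion. Then $\DC_M(B) = \coprod_{A\in\A}\coprod_{f\colon M(A)\to B} M(A)$ is an honest coproduct, so a connected component $C$ of a coalgebra-carrier $X$ really does factor through a single summand $M(A)$, via a map that is a monomorphism because coproduct injections are; Lemmas~\ref{l:conn-tech}--\ref{l:connected-components} then show that $C$ is a component of $M(A)$, and componentality of $\Delta$ gives $C\in\Delta$. Your easy direction (the coprojection at the identity is a coalgebra, and coalgebras are closed under coproducts) and your appeal to summand-closure are both in the right place; what is missing is the realisation that the single-pair factorisation must be arranged by \emph{choosing} the indexing category to be discrete, not recovered afterwards from the full one.
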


The theorem applies to a wide variety of classes of structures studied in the literature. In particular, these assumptions hold for all classes of finite structures classified by our game comonads and, moreover, for a number of typical examples of classes of structures for which a given graph parameter is bounded by a constant. For example, we obtain comonads for planar graphs, bipartite graphs, or graphs of  degree or clique-width bounded by a constant. Moreover, we show that the constructed comonad $\C$ is weakly initial among the comonads classifying $\Delta$, meaning that for any comonad $\D$ classifying $\Delta$, there is a comonad morphism $\C \nattrans \D$. This initiality allows us to obtain a characterisation of comonads that classify monotone \emph{nowhere dense classes} \cite{nevsetvril2011nowhere}.

Another important aspect of game comonads is that they classify various well-known binary relations between relational structures. We say that a comonad $\C$ \emph{classifies relation} $\asymp$ whenever $A \asymp B$ holds precisely when the cofree $\C$-coalgebras on $A$ and $B$ are isomorphic. For example, the comonad $\Ek$ classifies the relation that expresses that Duplicator has a winning strategy in the bijective $k$-round variant of the Ehrenfeucht–Fra\"iss\'e game \cite{abramskyshah2021relating} and, similarly,  $\Pk$ classifies the existence of a winning strategy in the bijective $k$-pebble game \cite{abramskydawarwang2017pebbling}. Furthermore, it was recently shown that the relation classified by $\Ek$ admits a Lov\'asz-type theorem. In particular, finite structures $A,B$ have isomorphic cofree $\Ek$-coalgebras if, and only if, they admit the same homomorphism counts from finite structures of tree-depth ${\leq}\, k$, i.e.\ when there is a bijection between $\hom(C,A)$ and $\hom(C,B)$ for every finite $C$ of tree-depth ${\leq}\, k$. Similar Lov\'asz-type theorems have  also been shown for $\Pk$ and the pebble-relation $\PRk$ comonads \cite{dawarjaklreggio2021lov,montacuteshah2021pebble}.

We show that the comonad constructed in the proof of Theorem~\ref{t:class-classification} automatically admits a Lov\'asz-type theorem for the class of structures it classifies. In fact, we show that such a comonad always has \emph{finite rank}\footnote{Comonads of finite rank should not be confused with finitary comonads, which is a weaker notion.}, which ensures that the category of coalgebras for the comonad is locally finitely presentable (cf.\ \cite[Proposition 1.12.1]{diers1986categories}, see also \cite[Appendix B]{reggio2021polyadic}) and therefore, by a recent result of Luca Reggio \cite[Corollary 5.15]{reggio2021polyadic}, admits the following Lov\'asz-type result.

\begin{corollary}\label{c:relation-classification}
    Let $\Delta$ be a \compbased{} class of finite structures and let $\asymp$ be the binary relation on finite structures such that, for any two finite structures $A,B$,
    \[ A \asymp B \iff \hom(C,A) \cong \hom(C,B) \quad\text{for all } C\in \Delta \]
    The comonad classifying $\Delta$ by Theorem~\ref{t:class-classification} also classifies $\asymp$.
\end{corollary}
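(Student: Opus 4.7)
The plan is to chain together the three ingredients mentioned in the paragraph preceding the corollary. Let $\C$ denote the comonad produced in Theorem~\ref{t:class-classification}, which the proof of that theorem builds as a density-style Kan extension indexed by the \compbased{} class $\Delta$.

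First, I would verify that $\C$ has finite rank. Because every object of $\Delta$ is finite, hence finitely presentable in $\Graph$, this should follow from inspecting the explicit colimit/coend formula defining $\C$: a filtered colimit of sufficiently large index commutes with the colimit computing $\C A$ for each $A$. Fixing the correct cardinal bound is essentially bookkeeping, but it is the only step that requires real content about the specific shape of the Kan extension from Theorem~\ref{t:class-classification}.

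Given finite rank, I invoke Diers' theorem \cite[Proposition 1.12.1]{diers1986categories} (see also \cite[Appendix B]{reggio2021polyadic}) to conclude that the Eilenberg--Moore category of $\C$-coalgebras is locally finitely presentable. Reggio's \cite[Corollary 5.15]{reggio2021polyadic} then applies and yields the desired Lov\'asz-type equivalence: the cofree $\C$-coalgebras on finite $A$ and $B$ are isomorphic iff $\hom(C,A) \cong \hom(C,B)$ for every $C$ in the canonical test class of finite structures associated to the comonad. This test class is (up to the identification used in the Lov\'asz-type statement) the class of finite structures admitting a $\C$-coalgebra, which by Theorem~\ref{t:class-classification} is exactly $\Delta$. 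The corollary follows.

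I expect the main obstacle to be the finite-rank verification in the first step: one must be explicit about the Kan extension formula defining $\C$ and identify the correct cardinal for which the underlying endofunctor preserves filtered colimits, which in turn requires knowing that the indexing diagram for the Kan extension is suitably small. Once finite rank is in place, the rest is a formal application of the cited results, together with the already-established classification of $\Delta$ by $\C$.
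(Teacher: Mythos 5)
Your overall route---establish finite rank, then apply Diers and Reggio's \cite[Corollary 5.15]{reggio2021polyadic}, and finally identify the test class with $\Delta$ via Theorem~\ref{t:class-classification}---is exactly the paper's route. The final identification step is fine: the structures admitting a coalgebra for the constructed comonad are precisely those in $\Delta$, so Reggio's equivalence is literally the statement of the corollary.

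The gap is in what you take ``finite rank'' to mean. You reduce the whole verification to showing that the underlying endofunctor preserves (sufficiently) filtered colimits, calling the rest ``bookkeeping'' about a cardinal bound. That only establishes that the comonad is \emph{finitary}, which the paper explicitly warns is a strictly weaker notion than finite rank. The definition actually needed (so that Diers' result makes $\EM(\DC_M)$ locally finitely presentable) has two further conditions: every morphism $f\colon A \to U(\xi)$ from a finitely presentable $A$ must factor through $U(\gamma)\colon U(\xi_0) \to U(\xi)$ for some coalgebra $\xi_0$ with finitely presentable carrier, and this factorisation must be essentially unique. Verifying these is the bulk of the paper's Section~\ref{s:lovasz} and is not cardinal bookkeeping: one writes $U(\xi) = \coprod_{i\in I} C_i$ with each $C_i$ connected and essentially in $\A$ (Theorem~\ref{t:categ-classification}), uses finite presentability of $A$ to factor $f$ through a finite sub-coproduct $\coprod_{i\in F} C_i$, and then needs Lemma~\ref{l:connected-coalg-2} to equip each component $C_i$ (hence the finite sub-coproduct) with a coalgebra structure making the inclusion a coalgebra morphism; uniqueness comes from the sub-coproduct inclusion being a monomorphism in a \decomposable{} category. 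This argument genuinely uses that the generators $M(A)$ are \emph{connected}, not just finitely presentable, and none of it is visible in your proposal. (The finitary part itself, by contrast, is a one-line copower computation $\DC_M(X) = \coprod_A \B(M(A),X)\cdot M(A)$ and needs no cardinal analysis, since the generators are finite.)
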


As an example, we obtain that the comonad obtained by Theorem~\ref{t:class-classification} which classifies planar graphs also classifies quantum isomorphism (cf.\ \cite{manvcinskaroberson2020quantum}), and similarly, the comonad for coproducts of cycles classifies co-spectrality, and the comonad for bipartite graphs classifies isomorphic bipartite double covers.

\bigskip
The density comonad construction is the main technical tool of this paper. In fact, we develop most of our theory by means of discrete density comonads, that is, density comonads of functors with discrete domain. A general overview of the necessary categorical terminology and results is given in Section~\ref{s:prelim}. Discrete density comonads are introduced in Section~\ref{s:discrete-dens-com} and Theorem~\ref{t:class-classification} is proved in Section~\ref{s:classif-theorem}. In Section~\ref{s:graph-param} we take a look at how graph parameters correspond to coalgebra numbers of graded comonads, compare discrete density comonads with game comonads, and characterise comonads classifying monotone nowhere dense classes of graphs. Lastly, in Section~\ref{s:lovasz} we prove Corollary~\ref{c:relation-classification} by showing that, under mild conditions, discrete density comonads have finite rank.

\section{Preliminaries}
\label{s:prelim}

In this section we fix notation and  recall some basic facts about comonads and the density construction.
We assume the reader is familiar with elementary category theory notions such as functors, natural transformations, adjunctions, limits and colimits (see e.g.~\cite{abramskytzevelekos2010introduction} or \cite{awodey2010category}).

Throughout the paper we use the following notation. Given a natural transformation $\lambda\colon E\Rightarrow F$ between functors $E,F\colon \A \to \B$ and functors $G\colon \B \to \B'$ and $H\colon \A' \to \A$, we denote by $G\lambda$ and $\lambda H$ the obvious natural transformations of type $GE \Rightarrow GF$ and $EH \Rightarrow FH$, respectively.

\subsection{Comonads and coalgebras}
A \emph{comonad} (on category $\A$) is a triple $(\C,\epsilon,\delta)$ where $\C\colon \A \to \A$ is an endofunctor, and $\epsilon\colon \C \nattrans \Id$ and $\delta\colon \C \nattrans \C^2$ are natural transformations such that the following diagrams commute.
\[
    \begin{tikzcdnat}
        \C \rar{\delta}\dar[swap]{\delta} & \C^2 \dar{\delta \C}  \\
        \C^2 \rar[swap]{\C\delta} & \C^3
    \end{tikzcdnat}
    \qquad
    \begin{tikzcdnat}
        \C \rar{\delta} \ar{dr}[swap]{\id} & \C^2 \dar{\epsilon \C} \\
        & \C
    \end{tikzcdnat}
    \qquad
    \begin{tikzcdnat}
        \C \rar{\delta} \ar{dr}[swap]{\id} & \C^2 \dar{\C\epsilon} \\
        & \C
    \end{tikzcdnat}
\]

A morphism $\alpha\colon A \to \C(A)$ is an \emph{(Eilenberg--Moore) $\C$-coalgebra}\footnotemark{} if the following diagrams commute.
\begin{equation}
    \begin{tikzcd}
        A \dar[swap]{\alpha} \ar{dr}{\id} \\
        \C(A) \rar[swap]{\epsilon_A} & A
    \end{tikzcd}
    \qquad
    \begin{tikzcd}
        A \rar{\alpha}\dar[swap]{\alpha} & \C^2(A) \dar{\delta_A}  \\
        \C(A) \rar[swap]{\C\alpha} & \C^2(A)
    \end{tikzcd}
    \label{e:coalg}
\end{equation}
We say that $A$ \emph{admits a coalgebra} if there exists a morphism $A\to \C(A)$ which is a $\C$-coalgebra.
\footnotetext{All coalgebras in this text are Eilenberg--Moore coalgebras. We do not work with functor coalgebras at any point.}

Coalgebras form a category $\EM(\C)$ where morphisms between coalgebras $(A,\alpha) \to (B,\beta)$ are morphisms $h\colon A \to B$ such that $\beta \circ h = \C(h) \circ \alpha$. Moreover, there is a pair of adjoint functors
\[  U^\C\colon \EM(\C) \to \A \qtq{and} F^\C\colon \A \to \EM(\C) \]
between $\EM(\C)$ and the underlying category $\A$.
The left adjoint is just a forgetful functor, it sends a coalgebra $\alpha\colon A\to \C(A)$ to its underlining object $U^\C(A,\alpha) = A$. The right adjoint returns the \emph{cofree coalgebra $F^\C(A)$ on $A$}, represented by the morphism $\delta_A\colon \C(A)\to \C^2(A)$.

\subsection{Comonad morphisms}
\label{s:comonad-morph}
Given two comonads $(\C,\epsilon^\C, \delta^\C)$ and $(\D,\epsilon^\D,\delta^\D)$ on $\A$, a natural transformation $\lambda\colon \C \nattrans \D$ is a \emph{comonad morphism} if the following two diagrams of natural transformations commute.
\begin{align}
    \begin{tikzcdnat}[ampersand replacement=\&]
    \C \ar{rr}{\lambda}\ar{rd}[swap]{\epsilon^\C} \& \& \D \ar{ld}{\epsilon^\C}\\
    \& \Id{}
    \end{tikzcdnat}
    \qquad
    \qquad
    \begin{tikzcdnat}[ampersand replacement=\&]
    \C \ar[swap]{d}{\delta^\C}\ar{rr}{\lambda} \& \& \D \ar{d}{\delta^\D} \\
    \C^2 \rar{\C\lambda} \& \C\D \rar{\lambda\D} \& \D^2
    \end{tikzcdnat}
    \label{e:comonad-morphisms}
\end{align}

Note that comonad morphisms can be equivalently presented as functors $L\colon \EM(\C) \to \EM(\D)$ such that the following diagram of functors commutes.
\[
\begin{tikzcd}
\EM(\C) \ar[swap]{rd}{U^\C} \ar{rr}{L} & & \EM(\D) \ar{ld}{U^\D} \\
& \A
\end{tikzcd}
\]
The functor $L$ is constructed from the comonad morphism given as a natural transformation $\lambda$ by sending $A\xrightarrow{\alpha} \C(A)$ to $A\xrightarrow{\alpha} \C(A) \xrightarrow{\lambda_A} \D(A)$. For details, see e.g.\ \cite{wisbauer2008algebras}.

\subsection{Density comonads}\label{s:density-comonads}

Next, we review basics of the theory of density comonads, introduced independently by Appelgate and Tierney~\cite{appelgate1969categories} and Kock~\cite{kock1966continuous} (who studied the dual notion of codensity monads). The \emph{density comonad} of a functor $M\colon \A \to \B$ is a functor $\DC_M\colon \B \to \B$ with a natural transformation $\eta\colon M \nattrans \DC_M M$.
\[ \begin{tikzcd}[sep=1.9em]
    \A
    \ar[rr, "M", ""{name=F, below}]
    \ar[swap]{dr}{M}
    & {} & \B \\
    &
    |[alias=C]| \B
    \ar[swap]{ur}{\DC_M}
    \ar[Rightarrow, from=F, to=C, "\eta",shorten >=0.4em,shorten <= 0.2em]
\end{tikzcd} \]
Moreover, $\eta$ is required to be the initial natural transformation with this property. In other words, for any functor $K\colon \B \to \B$ and a natural transformation $\varphi\colon M \nattrans KM$ there is a \emph{unique} $\varphi^*\colon \DC_M \nattrans K$ such that $\varphi = \varphi^*M \circ \eta$, i.e.\ diagramatically
\[
    \begin{tikzcd}[sep=1.9em]
        \A
        \ar[rr, "M", ""{name=F, below}]
        \ar[swap]{dr}{M}
        & {} & \B \\
        &
        |[alias=C]| \B
        \ar[swap]{ur}{K}
        \ar[Rightarrow, from=F, to=C, "\varphi",shorten >=0.4em,shorten <= 0.2em]
    \end{tikzcd}
    \quad = \quad
    \begin{tikzcd}[sep=2.8em]
        \A
        \ar[rr, "M", ""{name=F, below}]
        \ar[swap]{dr}{M}
        & {} & \B \\
        &
        |[alias=C]| \B
        \ar[ur, "\DC_M", pos=0.65, ""{name=Lan, below}]
        \ar[ur, swap, bend right=55, pos=0.55, "K", ""{name=K, above}]
        \ar[Rightarrow, from=F, to=C, "\eta",shorten >=0.4em,shorten <= 0.2em]
        \ar[Rightarrow, from=Lan, to=K, "\varphi^*", swap]
    \end{tikzcd}
\]
Density comonads are special types of left Kan extensions. They do not exist for all functors. However, when $\A$ is a small category and $\B$ is cocomplete then $\DC_M$ exists for every functor $M\colon \A \to \B$. In such case, $\DC_M(B)$ is computed as the colimit of the diagram:
\[ M \downarrow B \xrightarrow{V} \A \xrightarrow{M} \B \]
where $V$ is the forgetful functor from the comma category $M \downarrow B$, which consists of pairs $(A, f)$ where $f\colon M(A) \to B$ is a morphism in $\B$, and morphisms $(A, f) \to (A',f')$ between such pairs are morphisms $g\colon A \to A'$ in $\A$ making the following triangle commute.
\[
\begin{tikzcd}
    M(A) \ar[swap]{rd}{f}\ar{rr}{M(g)} & & M(A')\ar{ld}{f'} \\
    & B
\end{tikzcd}
\]
We may express the same fact by the formula:
\begin{align}
\DC_M(B) \enspace =
\colim_{
A\in \A,\  M(A) \to B
} M(A).
\label{e:density-comonad-formula}
\end{align}

Note that $\A$ does not have to be small nor $\B$ cocomplete in general. It is enough that the colimit above exists. In such case we speak of \emph{pointwise} density comonads. Denote by
\[ \iota_f\colon M(A) \to \DC_M(B) \]
the inclusion morphism of the copy of $M(A)$ corresponding to the morphism $f\colon M(A) \to B$ into the colimit. Then, the component $\eta_A\colon M(A) \to \DC_M(M(A))$ of the natural transformation $\eta\colon M \nattrans \DC_M M$ is given as $\iota_f$ for $f$ equal to the identity morphism $\id\colon M(A) \to M(A)$.

\subsection{The comonad structure}\label{s:lifting-M}
The initiality of $\eta\colon M \nattrans \DC_M M$ ensures that we can equip $\DC_M$ with a comonad structure. In particular, the identity natural transformation $M \nattrans \Id \circ M$ uniquely factors as the composition of $\eta$ with the counit $\epsilon\colon \DC_M \nattrans \Id$ and, similarly, $\DC_M(\eta) \circ \eta\colon M \nattrans \DC_M \circ \DC_M \circ M$ factors through the comultiplication ${\delta\colon \DC_M \nattrans \DC_M^2}$. In other words, the counit and the comultiplication are uniquely determined by the equations
\begin{align}
\epsilon M \circ \eta = \id \quad\text{and}\quad \delta M \circ \eta  = \DC_M(\eta) \circ \eta .
\label{eq:eta-comonad}
\end{align}

Moreover, these two equations guarantee that the functor
\begin{align}
    M\fl\colon \A \to \EM(\DC_M)
    \label{eq:M-fl}
\end{align}
which sends \(A\in \A\) to the coalgebra \(\eta_A\colon M(A) \to \DC_M(M(A))\) is well-defined.

Lastly, we recall three equations of density comonads, following from \eqref{eq:eta-comonad}, which we use extensively throughout the paper. For morphisms $f\colon M(A) \to B$ and $h\colon B\to C$, the following three triangles commute.
\[
\begin{matrix}
    \begin{tikzcd}[ampersand replacement=\&]
        M(A) \dar[swap]{\iota_f}\ar{dr}{\iota_{h\mkern1mu\circ f}} \\
        \DC_M(B) \rar[swap]{\DC_M(h)} \& \DC_M(C)
    \end{tikzcd}
\\[3.2em]
\text{(DC1)}
\end{matrix}
\quad
\begin{matrix}
\begin{tikzcd}[ampersand replacement=\&]
    M(A) \dar[swap]{\iota_f}\ar{dr}{f} \\
    \DC_M(B) \rar[swap]{\epsilon_B} \& B
\end{tikzcd}
\\[3.2em]
\text{(DC2)}
\end{matrix}
\quad
\begin{matrix}
\begin{tikzcd}[ampersand replacement=\&]
    M(A) \dar[swap]{\iota_f} \ar{dr}{\iota_{\iota_f}} \\
    \DC_M(B) \rar[swap]{\delta_B} \& \DC_M(\DC_M(B))
\end{tikzcd}
\\[3.2em]
\text{(DC3)}
\end{matrix}
\]

\subsection{Comonad morphisms from composites}
\label{s:composed-functors}

Let $M$ be the composite of functors
\[ \A_0 \xrightarrow{M_0} \A \xrightarrow{M_1} \B \]
such that the density comonads $\DC_M$ and $\DC_{M_1}$ exist. Let
\[ \eta\colon M \nattrans \DC_M M \qquad \eta^1\colon M_1 \nattrans \DC_{M_1} M_1 \]
be the corresponding initial natural transformations. By initiality of $\eta$, there is a unique natural transformation
\[ \lambda\colon \DC_M \nattrans \DC_{M_1} \]
such that:
\[
    \begin{tikzcd}[sep=1.9em]
        \A_0 \rar{M_0}
        & \A
        \ar[rr, "M_1", ""{name=F, below}]
        \ar[swap]{dr}{M_1}
        & {} & \B \\
        & &
        |[alias=C]| \B
        \ar[swap]{ur}{\DC_{M_1}}
        \ar[Rightarrow, from=F, to=C, "\eta^1",shorten >=0.4em,shorten <= 0.2em]
    \end{tikzcd}
    \quad = \quad
    \begin{tikzcd}[sep=3.7em]
        \A
        \ar[rr, "M", ""{name=F, below}]
        \ar[swap]{dr}{M}
        & {} & \B \\
        &
        |[alias=C]| \B
        \ar[ur, "\DC_M", pos=0.55, ""{name=Lan, below}]
        \ar[ur, swap,bend right=55, "\DC_{M_1}", ""{name=K, above}]
        \ar[Rightarrow, from=F, to=C, "\eta",shorten >=0.4em,shorten <= 0.2em]
        \ar[Rightarrow, from=Lan, to=K, "\lambda"]
    \end{tikzcd}
\]

\begin{lemma}\label{l:comonad-morphism}
$\lambda\colon \DC_M \Rightarrow \DC_{M_1}$ is a comonad morphism.
\end{lemma}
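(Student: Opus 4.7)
The plan is to verify the two comonad-morphism axioms in~\eqref{e:comonad-morphisms} using the universal property of $\eta\colon M \nattrans \DC_M M$: two natural transformations $\DC_M \nattrans K$ coincide as soon as they agree after whiskering by $M$ and composing with $\eta$. The defining square for $\lambda$ unpacks to the key identity
\[ \lambda M \circ \eta \;=\; \eta^1 M_0 \colon M \nattrans \DC_{M_1} M, \]
which, combined with the comonad identities~\eqref{eq:eta-comonad} for $\DC_M$ and $\DC_{M_1}$, will drive both diagram chases.

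For the counit triangle $\epsilon^{\DC_{M_1}} \circ \lambda = \epsilon^{\DC_M}$, I would whisker both sides by $M$ and precompose with $\eta$. The right-hand side collapses to $\id_M$ directly by~\eqref{eq:eta-comonad}. The left-hand side rewrites as $\epsilon^{\DC_{M_1}} M \circ \lambda M \circ \eta = \epsilon^{\DC_{M_1}} M \circ \eta^1 M_0 = (\epsilon^{\DC_{M_1}} M_1 \circ \eta^1) M_0$, which is $\id_M$ by the counit law for $\DC_{M_1}$ whiskered by $M_0$. Uniqueness of the factorisation yields the triangle.

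For the comultiplication square $(\lambda \DC_{M_1}) \circ (\DC_M \lambda) \circ \delta^{\DC_M} = \delta^{\DC_{M_1}} \circ \lambda$, I would again whisker by $M$ and precompose with $\eta$. Using $\delta^{\DC_M} M \circ \eta = \DC_M(\eta) \circ \eta$, functoriality of $\DC_M$, and the key identity, the left-hand side reduces to $\lambda \DC_{M_1} M \circ \DC_M(\eta^1 M_0) \circ \eta$; naturality of $\lambda$ applied componentwise to $\eta^1 M_0$ then rewrites this as $\DC_{M_1}(\eta^1 M_0) \circ \lambda M \circ \eta = \DC_{M_1}(\eta^1 M_0) \circ \eta^1 M_0$. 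The right-hand side, using the key identity and the comultiplication law for $\DC_{M_1}$ whiskered by $M_0$, produces $\DC_{M_1}(\eta^1) M_0 \circ \eta^1 M_0 = \DC_{M_1}(\eta^1 M_0) \circ \eta^1 M_0$, matching exactly.

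The main obstacle is careful whiskering bookkeeping: each equation lives in a slightly different functor category, and one has to recognise that $\DC_M(\lambda M) \circ \DC_M(\eta) = \DC_M(\eta^1 M_0)$ by functoriality and that naturality of $\lambda$ against the pointwise components $\eta^1_{M_0(A)}$ is what swaps $\lambda$ past $\DC_M(\eta^1 M_0)$. Beyond the universal property, functoriality, and naturality, no new ideas are required.
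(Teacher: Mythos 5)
Your proof is correct. The paper itself gives no proof of this lemma, deferring instead to Dubuc (Theorem II.1.1, p.~73), but your argument --- reducing both comonad-morphism axioms to equalities of natural transformations out of $\DC_M$, checking them after whiskering by $M$ and precomposing with $\eta$ via the key identity $\lambda M \circ \eta = \eta^1 M_0$, and invoking uniqueness --- is exactly the technique the paper uses in the appendix for the closely analogous statement that $\varphi^*\colon \DC_M \nattrans \C$ is a comonad morphism. The whiskering bookkeeping and the naturality step that moves $\lambda$ past $\DC_M(\eta^1 M_0)$ are handled correctly, so nothing is missing.
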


In fact, $\DC_{(-)}$ is a functor from the category of functors $X \to \B$ which admit density comonads %
into the category of comonads and comonad morphisms (cf.~page~73 in~\cite{dubuc2006kan}, see also~\cite{leinster2013codensity}).

\section{Discrete density comonads}
\label{s:discrete-dens-com}

Recall that a category is \emph{discrete} whenever it only has identity morphisms.
By \emph{discrete density comonads} we mean density comonads for functors whose domain is discrete. 
An important feature of discrete density comonads is that the formula in \eqref{e:density-comonad-formula} simplifies dramatically. Indeed, assume that
\[ M\colon \A \to \B \]
is a fixed functor from a small and discrete category $\A$. Then, since $\A$ is discrete, there are no morphisms between objects $M(A)$, given by morphisms ${M(A) \to B}$, in the colimit formula \eqref{e:density-comonad-formula}. Therefore, the density comonad $\DC_M\colon \B \to \B$ is computed as a coproduct, that is, the colimit of a discrete diagram. Concretely, for an object $B$ in $\B$,
\begin{align}
    \DC_M(B) \enspace=\enspace \coprod_{A\in \A} \ \coprod_{f\colon M(A)\to B} M(A).
    \label{e:discrete-formula}
\end{align}
Note that $\DC_M$ exists whenever the above coproduct exists in $\B$, for every object $B\in \B$. In particular, $\DC_M$ exists whenever $\B$ has all coproducts.

As with general density comonads, we have inclusion morphisms
\[ \iota_f\colon M(A) \to \DC_M(B) \]
for every $f\colon M(A) \to B$, which satisfy axioms (DC1)--(DC3) from Section~\ref{s:lifting-M}.

\medskip
For the proof of Theorem~\ref{t:class-classification}, the category $\B$ is either the category $\Rs$ of \mbox{$\sigma$-structures} (i.e.\ relational structures in a fixed relational signature $\sigma$) or the category $\Graph$ of graphs (where by graphs we mean undirected loopless graphs). The morphisms in these categories are the structure-preserving functions: $\sigma$-structure homomorphisms $f\colon A\to B$ satisfy that $R^A(x_1,\dots,x_n)$ implies $R^B(f(x_1),\dots,f(x_n))$ and, likewise, graph homomorphisms preserves the edge relation. For example, in the former case, we may describe the comonad~$\DC_M$ explicitly as follows. For a $\sigma$-structure~$B$, the universe of $\DC_M(B)$ consists of tuples
\[ (A, f, x) \]
where $f\colon M(A) \to B$ is a homomorphism of relational structures and $x$ is an element of $M(A)$. Further, an $n$-ary relation $R$ in $\sigma$ is interpreted as the set of all tuples
\[ (A, f, x_1), \quad \dots \ , \quad (A, f, x_n) \]
such that $R(x_1, \ \dots, \  x_n)$ in $M(A)$.

\begin{example}
    Let $\A$ be the discrete subcategory of graphs, consisting of only the triangle graph, and let $M\colon \A \to \Graph$ be the inclusion of $\A$ into the category of graphs.
    Then, given an arbitrary graph $G$, the graph computed as $\DC_M(G)$ is the disjoint union of $k \times l$ triangles, where $k$ is the number of triangles in $G$ and $l$ is the number of automorphisms of the triangle graph, i.e.\ $l=6$.

\end{example}

\section{The abstract classification theorem}
\label{s:classif-theorem}

In this section we prove Theorem~\ref{t:class-classification}. Since the entire argument can be carried out at the abstract categorical level, we actually prove a general categorical statement that can be applied in other scenarios too. In the following we fix a functor
\[M\colon \A \to \B\]
from a discrete category $\A$ into $\B$. We further assume that the pointwise density comonad $\DC_M$ exists on $\B$ (i.e.\ it is given by the formula~\eqref{e:discrete-formula}).

We start with a useful observation. Recall that an object $C$ is \emph{connected} iff, for every morphism $f\colon C \to \coprod_i A_i$ into a coproduct, the morphism $f$ factors uniquely through one of the inclusion morphisms $\iota_i\colon A_i \to \coprod_i A_i$.\footnote{Equivalently, $C$ is connected iff $\hom(C,-)$ preserves coproducts.
} Whenever a connected $C$ is such that $A \cong C + X$ for some $A,X$, we say that $C$ is a \emph{component} of $A$.

\begin{lemma}\label{l:conn-tech}
    Let $\xi\colon X\to \DC_M(X)$ be a $\DC_M$-coalgebra and let $\iota_C\colon C \to X$ be a component inclusion. Furthermore, let $f\colon M(A) \to X$ be the morphism for which $\xi\circ \iota_C$ decomposes through $\iota_f$, as shown below.
    \begin{align}
        \begin{tikzcd}[ampersand replacement=\&]
            C \rar{\iota_C}\dar[swap]{z} \& X \dar{\xi} \\
            M(A) \ar[dashed,pos=0.40]{ru}{f} \rar{\iota_f} \& \DC_M(X)
        \end{tikzcd}
        \label{eq:conn-tech}
    \end{align}
    Then, also the two triangles in the diagram above commute, that is, $\iota_C = f \circ z$ and $\iota_f = \xi \circ f$.
\end{lemma}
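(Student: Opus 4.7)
The plan is to exploit the two coalgebra laws for $\xi$ together with the density-comonad identities (DC1)--(DC3), applied to the hypothesis $\xi \circ \iota_C = \iota_f \circ z$ supplied by the factorisation.

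For the first triangle $\iota_C = f \circ z$, I would post-compose the hypothesis with the counit component $\epsilon_X$. The counit coalgebra law $\epsilon_X \circ \xi = \id_X$ collapses the left-hand side to $\iota_C$, while (DC2) rewrites $\epsilon_X \circ \iota_f = f$ on the right-hand side. Equating the two yields $\iota_C = f \circ z$ immediately, with no further appeal to connectedness.

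The second triangle $\iota_f = \xi \circ f$ is the subtle part. I would post-compose the comultiplication coalgebra law $\delta_X \circ \xi = \DC_M(\xi) \circ \xi$ with $\iota_C$ and substitute $\xi \circ \iota_C = \iota_f \circ z$ on both sides to obtain
\[ \delta_X \circ \iota_f \circ z \;=\; \DC_M(\xi) \circ \iota_f \circ z. \]
Now (DC3) rewrites $\delta_X \circ \iota_f$ as $\iota_{\iota_f}$, namely the coproduct injection of $M(A)$ into $\DC_M(\DC_M(X))$ at the index $(A, \iota_f)$, and (DC1) rewrites $\DC_M(\xi) \circ \iota_f$ as $\iota_{\xi \circ f}$, the injection at the index $(A, \xi \circ f)$. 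We thus arrive at the equation
\[ \iota_{\iota_f} \circ z \;=\; \iota_{\xi \circ f} \circ z \colon C \to \DC_M(\DC_M(X)). \]

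To conclude, I would invoke connectedness of $C$, which holds because $C$ is a component of $X$. Since $\DC_M(\DC_M(X))$ is a coproduct of copies of $M(A')$ indexed by morphisms $M(A') \to \DC_M(X)$, any map out of the connected object $C$ into this coproduct factors through a unique summand with a unique factor. The displayed equation exhibits the very same map factored in two ways, both with factor $z$ but with indices $(A, \iota_f)$ and $(A, \xi \circ f)$; uniqueness therefore forces the two indices to coincide, giving $\iota_f = \xi \circ f$. The only real obstacle is the clean deployment of connectedness at this last step; the rest is bookkeeping in the coalgebra axioms combined with (DC1)--(DC3).
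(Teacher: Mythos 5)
Your proof is correct and follows essentially the same route as the paper's: the counit coalgebra law plus (DC2) for the first triangle, and the comultiplication law plus (DC3), (DC1), and connectedness of $C$ (uniqueness of the factorisation through the coproduct $\DC_M(\DC_M(X))$) for the second. No gaps.
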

\begin{prf}
The first equality is obtained immediately from the triangle law of coalgebras (cf.\ \eqref{e:coalg}) together with (DC2) from Section~\ref{s:lifting-M} as
\[ \iota_C = \epsilon_C \circ \xi \circ \iota_C = \epsilon_C \circ \iota_f \circ z = f \circ z.\]
To show that also $\iota_f = \xi \circ f$ we apply the square law of coalgebras (cf.\ \eqref{e:coalg}). Observe that
\begin{itemize}
    \item $\delta_X \circ \xi \circ \iota_C = \delta_X \circ \iota_f \circ z = \iota_{\iota_f} \circ z$ by (DC3), and
    \item $\DC_M(\xi) \circ \xi \circ \iota_C = \DC_M(\xi) \circ \iota_f \circ z = \iota_{\xi\circ f} \circ z$ by (DC1).
\end{itemize}
    Because $C$ is connected, the factorisation $C\to M(A) \to \DC_M(\DC_M(C))$ into the coproduct must be unique, hence $\iota_f = \xi \circ f$.
\end{prf}

In the following we need to assume that $\B$ is a \emph{\decomposable{} category}, i.e.\ that
\begin{itemize}
\item every object in $\B$ is (isomorphic to) a coproduct of connected objects, and
\item inclusion morphisms into coproducts $\iota_i\colon a_i \to \coprod_i a_i$ are monomorphisms.
\end{itemize}
We say that an object of $\B$ is \emph{essentially in} $\A$ if it is isomorphic to $M(A)$, for some $A$ in $\A$.

Lemma~\ref{l:conn-tech} directly implies a version of Theorem~\ref{t:class-classification} for connected objects.

\begin{restatable}{lemma}{connectedCoalg}
    \label{l:connected-coalg}
    If $\B$ is a \decomposable{} category, then a connected object of $\B$ is essentially in $\A$ iff it admits a $\DC_M$-coalgebra.
\end{restatable}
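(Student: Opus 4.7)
The plan is to handle the two directions separately, with the reverse direction being the substantive one.

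For the forward direction, suppose $C$ is essentially in $\A$, so $C \cong M(A)$ for some $A \in \A$. By construction of $\DC_M$ we have the coalgebra $\eta_A \colon M(A) \to \DC_M(M(A))$ given by the functor $M\fl$ from \eqref{eq:M-fl}, and since $\EM(\DC_M)$ is closed under isomorphism, transporting $\eta_A$ along $C \cong M(A)$ furnishes $C$ with a $\DC_M$-coalgebra.

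For the reverse direction, suppose $\xi\colon C \to \DC_M(C)$ is a $\DC_M$-coalgebra with $C$ connected. By the explicit formula \eqref{e:discrete-formula}, $\DC_M(C)$ is a coproduct of objects of the form $M(A)$, one for each pair $(A,f)$ with $f\colon M(A) \to C$. Since $C$ is connected, $\xi$ factors through exactly one inclusion: there exist $A \in \A$, a morphism $f\colon M(A) \to C$, and $z\colon C \to M(A)$ such that $\xi = \iota_f \circ z$. I would now apply Lemma~\ref{l:conn-tech} in the special case where $X = C$ and the component inclusion $\iota_C$ is taken to be $\id_C$ itself (which is legitimate since $C$ is connected, hence a component of itself). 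The lemma then yields the two equations
\[ \id_C \;=\; f \circ z \qtq{and} \iota_f \;=\; \xi \circ f. \]

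All that remains is to verify that $z$ and $f$ are in fact mutually inverse, which would give $C \cong M(A)$. Combining the two equations from the previous step with $\xi = \iota_f \circ z$, I compute
\[ \iota_f \;=\; \xi \circ f \;=\; \iota_f \circ z \circ f. \]
Since $\B$ is componental, the inclusion $\iota_f\colon M(A) \to \DC_M(C)$ is a monomorphism, so cancelling it gives $z \circ f = \id_{M(A)}$. Together with $f \circ z = \id_C$ this establishes that $z$ is an isomorphism, so $C$ is essentially in $\A$. The main obstacle, such as it is, is correctly identifying that one may invoke Lemma~\ref{l:conn-tech} with the trivial component inclusion $\id_C$; once this is seen the argument is just a short manipulation using the monicness axiom of componental categories.
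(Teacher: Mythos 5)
Your proposal is correct and follows essentially the same route as the paper's own proof: the forward direction via $M\fl$, and the reverse direction by factoring the coalgebra through a single summand of the coproduct, invoking Lemma~\ref{l:conn-tech} with the trivial component inclusion $\id_C$, and using monicness of $\iota_f$ to get $z\circ f = \id$. The only cosmetic difference is that the paper cancels the monomorphism against $\beta = \iota_f\circ\beta_0$ directly rather than via the intermediate identity $\iota_f = \iota_f\circ z\circ f$, but this is the same argument.
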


\noindent
Since the non-trivial direction is proved similarly to Lemma~\ref{l:connected-components} below, we omit its proof.
Next, we show a useful feature of \decomposable{} categories.

\begin{lemma}
    \label{l:component-factorisation}
    In a \decomposable{} category, a component inclusion $C \to Y$ which factors through a monomorphism $X \to Y$ is a component of $X$ as well.
\end{lemma}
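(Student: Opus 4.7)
The hypothesis unpacks to: $Y \cong C + Y'$ with $\iota_C\colon C \to Y$ the component inclusion, $m\colon X \to Y$ a monomorphism, and $f\colon C \to X$ witnessing $m\circ f = \iota_C$; the goal is to exhibit $X \cong C + X'$ for some $X'$. The plan is to locate $C$ among the connected summands of $X$ and use $f$ and $m$ to identify it with the given component of $Y$.

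First I would invoke the \decomposable{} hypothesis on $\B$ to write $X \cong \coprod_i C_i$ with each $C_i$ connected and each coproduct inclusion $\iota_{C_i}$ monic. Since $C$ is connected, applying connectedness to $f\colon C \to X$ yields a unique index $j$ and a unique morphism $g\colon C \to C_j$ with $f = \iota_{C_j}\circ g$. The remainder reduces to showing that $g$ is an isomorphism, for then $C \cong C_j$ and $X \cong C_j + \coprod_{i\ne j} C_i \cong C + X'$, exhibiting $C$ as a component of $X$.

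To produce the inverse, I would apply connectedness of $C_j$ to the composite $m\circ \iota_{C_j}\colon C_j \to Y$, viewing $Y \cong C + Y'$ as a two-summand coproduct. This composite factors through either $\iota_C$ or $\iota_{Y'}$. The second option is impossible: if $m\circ \iota_{C_j} = \iota_{Y'}\circ h'$ for some $h'$, then $\iota_C = m\circ f = \iota_{Y'}\circ h'\circ g$, placing $\iota_C$ in the $Y'$-summand; but $\iota_C$ also factors tautologically through itself via $\id_C$, contradicting the uniqueness clause in the definition of connectedness. Hence there is $h\colon C_j \to C$ with $m\circ \iota_{C_j} = \iota_C\circ h$.

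Finally, $h$ and $g$ are seen to be mutually inverse by cancelling monomorphisms. The identity $\iota_C\circ h\circ g = m\circ \iota_{C_j}\circ g = m\circ f = \iota_C$ together with $\iota_C$ monic gives $h\circ g = \id_C$. Then $m\circ \iota_{C_j}\circ g\circ h = \iota_C\circ h\circ g\circ h = \iota_C\circ h = m\circ \iota_{C_j}$, and cancelling the monomorphism $m$ and then $\iota_{C_j}$ gives $g\circ h = \id_{C_j}$. The central subtlety is the middle step: one must invoke not merely the existence but the uniqueness of the factorisation supplied by connectedness, and recognise that the tautological factorisation of $\iota_C$ through itself is exactly what rules out the wrong summand.
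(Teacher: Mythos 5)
Your proof is correct and follows essentially the same route as the paper's: decompose $X$ into connected summands, use connectedness of $C$ to land in a single summand $C_j$, use connectedness of $C_j$ together with the uniqueness clause to map back into the $C$-summand of $Y$, and conclude that the two maps are mutually inverse. The only cosmetic difference is that the paper finishes by observing that the map $C_j \to C$ is a monomorphism and a split epimorphism (hence an isomorphism), whereas you compute both composites explicitly by cancelling monomorphisms; both steps rest on the same hypotheses.
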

\begin{prf}
    Assume $Y$ is equal to the coproduct $\coprod_i C_i$ and $X$ is equal to $\coprod_j D_j$ for some collections of connected objects $\{C_i\}_i$ and $\{D_j\}_j$. By assumption, the component inclusion $\iota\colon C\to Y$ factors as $m \circ h$ for some monomorphism $m\colon X \to Y$ and a morphism $h\colon C \to X$.

    Since $C$ is connected, $h$ factors through a component inclusion $\iota_j\colon D_j \to X$ as shown in the left diagram below:
    \[
        \begin{tikzcd}
            D_j \dar[swap]{\iota_j} & C \ar[swap]{dl}{h} \dar{\iota} \lar[swap]{h_0} \\
            X \rar{m} & Y
        \end{tikzcd}
        \qquad
        \qquad
        \begin{tikzcd}
            D_j \rar{u}\ar[swap]{dr}{m \circ \iota_j} & C_k\dar{\iota_k} \\
            & Y
        \end{tikzcd}
    \]
    Since $D_j$ is connected, $m \circ \iota_j$ factors through some inclusion $\iota_k\colon C_k \to Y$, as shown in the right diagram above. But then $i = k$ and $u \circ h_0 = \id$ since $\iota_i = m \circ h = m \circ \iota_j \circ h_0 = \iota_k \circ u \circ h_0$ and component inclusions are unique. Furthermore, since both $m$ and $\iota_j$ are monomorphisms by our assumptions, so must be $u$ because $\iota_\ell \circ u = m \circ \iota_j$. Consequently, $u$ is an isomorphism because it is both a monomorphism and a split epimorphism.
\end{prf}

To make progress, we need to assume that $\A$ is \emph{\compbased{}}. This means that, whenever $B\in \B$ is essentially in $\A$ then so is every component of $B$. Note that this condition mirrors the third item in the definition of \compbased{} classes.
With this we show the main technical lemma of this section.

\begin{lemma}\label{l:connected-components}
    If $\A$ is \compbased{} and $\B$ is a \decomposable{} category, then any component $C$ of an object $X$ of $\B$ which admits a $\DC_M$-coalgebra $\xi\colon X\to \DC_M(X)$ is essentially in $\A$.
\end{lemma}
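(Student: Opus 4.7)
My plan is to chain together Lemmas~\ref{l:conn-tech} and \ref{l:component-factorisation} and then invoke the componental assumption on $\A$. Concretely, let $\iota_C\colon C\to X$ be the component inclusion and apply Lemma~\ref{l:conn-tech} to obtain some $A\in\A$, a morphism $f\colon M(A)\to X$, and a morphism $z\colon C\to M(A)$ satisfying the two triangle identities $\iota_C = f\circ z$ and $\iota_f = \xi\circ f$ witnessed by the square~\eqref{eq:conn-tech}. At this point $C$ sits inside $X$ via a factorisation through $M(A)$, and the goal is to realise $C$ as a component of $M(A)$ so that componentality of $\A$ finishes the job.

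The key step is to promote $f$ to a monomorphism. Since $\DC_M(X)$ is computed as the coproduct~\eqref{e:discrete-formula} in the componental category $\B$, the coproduct inclusion $\iota_f\colon M(A)\to \DC_M(X)$ is a monomorphism by the second clause of the definition of a componental category. From $\iota_f = \xi\circ f$ it then follows that $f$ itself is a monomorphism, for any two morphisms $u,v$ with $f\circ u = f\circ v$ yield $\iota_f\circ u = \iota_f\circ v$, hence $u=v$.

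Having established that $f\colon M(A)\to X$ is a monomorphism, the component inclusion $\iota_C = f\circ z$ factors through this mono, so Lemma~\ref{l:component-factorisation} applies and tells us that $C$ is a component of $M(A)$. Since $M(A)$ is essentially in $\A$ and $\A$ is componental, every component of $M(A)$ is essentially in $\A$, so $C$ is essentially in $\A$, as required.

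I do not expect a serious obstacle here: Lemma~\ref{l:conn-tech} has already done the work of exposing the factorisation through some $M(A)$, and the only remaining ingredient is turning $f$ into a monomorphism, which is precisely what the identity $\iota_f = \xi\circ f$ together with componentality of $\B$ is designed to deliver. The one point that needs a little care is distinguishing the internal connected decomposition of $M(A)$ (which we do not need to unpack by hand) from the coproduct structure of $\DC_M(X)$ used to identify $\iota_f$ as a mono.
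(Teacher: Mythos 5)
Your proposal is correct and follows the paper's own proof essentially verbatim: invoke Lemma~\ref{l:conn-tech} to get the commuting square, observe that $f$ is a monomorphism because $\iota_f = \xi \circ f$ is (being a coproduct inclusion in a \decomposable{} category), apply Lemma~\ref{l:component-factorisation} to see $C$ is a component of $M(A)$, and conclude by componentality of $\A$. The only difference is that you spell out the cancellation argument for $f$ being mono, which the paper leaves implicit.
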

\begin{prf}
    Let $\iota_C\colon C \to X$ be the inclusion morphism of $C$ into $X$. Furthermore, let $f\colon M(A) \to X$ be the morphism such that $\xi\circ \iota_C$ decomposes through $\iota_f\colon M(A) \to \DC_M(X)$ and recall that, by Lemma~\ref{l:conn-tech}, the following diagram commutes.
\begin{align}
    \begin{tikzcd}[ampersand replacement=\&]
        C \rar{\iota_C}\dar[swap]{z} \& X \dar{\xi} \\
        M(A) \rar{\iota_f}\ar[pos=0.40]{ru}{f} \& \DC_M(X)
    \end{tikzcd}
\label{eq:comp-selection}
\end{align}
    Observe that $f$ is a monomorphism since $\iota_f$ is. Therefore, by Lemma~\ref{l:component-factorisation}, $C$ is a component of $M(A)$. Consequently, $C$ is essentially in $\A$ because $\A$ is \compbased{}.
\end{prf}

The main classification theorem, which we state in full, is obtained as a consequence of the previous lemma.
\begin{theorem}\label{t:categ-classification}
    Let $M\colon \A\to \B$ be a functor from a discrete \compbased{} category $\A$ into a \decomposable{} category $\B$ such that the pointwise density comonad $\DC_M$ exists.

    Then an object $b \in \B$ is isomorphic to a coproduct of objects essentially in~$\A$ if and only if $b$ admits a $\DC_M$-coalgebra.
\end{theorem}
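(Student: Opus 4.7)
The plan is to prove the two directions separately, using Lemma~\ref{l:connected-components} for the backward direction and an explicit construction built from the density-comonad inclusions for the forward direction.

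For the direction ``admits a $\DC_M$-coalgebra $\Rightarrow$ coproduct of objects essentially in $\A$'': suppose $\xi\colon b \to \DC_M(b)$ is a coalgebra. Since $\B$ is \decomposable{}, write $b \cong \coprod_{i\in I} C_i$ with each $C_i$ connected; then each $C_i$ is a component of $b$. By Lemma~\ref{l:connected-components}, every $C_i$ is essentially in $\A$, so $b$ is a coproduct of objects essentially in $\A$, as required. This direction is essentially immediate once the componental decomposition is combined with the per-component classification already proved.

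For the direction ``coproduct of objects essentially in $\A$ $\Rightarrow$ admits a $\DC_M$-coalgebra'': since admitting a coalgebra is invariant under isomorphism, assume $b = \coprod_{i\in I} M(A_i)$ with coproduct inclusions $j_i\colon M(A_i) \to b$ (renaming to avoid clashing with the density-comonad notation $\iota_{(-)}$). For each $i$, the morphism $j_i\colon M(A_i) \to b$ indexes a density-comonad inclusion $\iota_{j_i}\colon M(A_i) \to \DC_M(b)$, and by the universal property of the coproduct there is a unique $\alpha\colon b \to \DC_M(b)$ with $\alpha \circ j_i = \iota_{j_i}$ for every $i$. I would verify that $\alpha$ is a $\DC_M$-coalgebra by checking both coalgebra laws after precomposing with each $j_i$ (which suffices by the universal property). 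The counit law reduces to $\epsilon_b \circ \iota_{j_i} = j_i$, which is exactly (DC2). For the coassociativity law, (DC3) gives $\delta_b \circ \iota_{j_i} = \iota_{\iota_{j_i}}$, while (DC1) gives $\DC_M(\alpha) \circ \iota_{j_i} = \iota_{\alpha \circ j_i} = \iota_{\iota_{j_i}}$; the two sides therefore agree.

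The only obstacle worth flagging is a minor bookkeeping one: the construction of $\alpha$ requires the density-comonad inclusions $\iota_{(-)}$ to behave compatibly with coproduct inclusions $j_i$, but this is precisely the content of the axioms (DC1)--(DC3) recalled in Section~\ref{s:lifting-M}, so no further work is needed. The real conceptual content of the theorem lies in Lemma~\ref{l:connected-components}, and the theorem follows essentially by bolting the forward construction onto it.
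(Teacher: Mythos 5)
Your proof is correct and follows essentially the same route as the paper: the backward direction is identical (componental decomposition plus Lemma~\ref{l:connected-components}), and your explicit construction of $\alpha$ with $\alpha \circ j_i = \iota_{j_i}$ is precisely the coproduct of the coalgebras $\eta_{A_i}$ that the paper invokes via closure of coalgebras under coproducts, just verified by hand using (DC1)--(DC3).
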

\begin{prf}
    The left-to-right implication follows the fact that $M\fl(A)$ is a coalgebra on $M(A)$, for every $A\in \A$ (cf.\ \eqref{eq:M-fl}), and that coalgebras are closed under coproducts that exist in $\B$.
    Conversely, if $\xi\colon X\to \DC_M(X)$ is a coalgebra then, by our assumptions, $X$ is isomorphic to a coproduct $\coprod_i C_i$ of connected objects by Lemma~\ref{l:connected-components} and all those components are essentially in $\A$.
\end{prf}

Observe that both the category of relational structures $\Rs$ and the category of graphs $\Graph$ are \decomposable{} categories.\footnote{Note that for $\Rs$, the connected objects are those structures whose Gaifman graphs are connected.}
Therefore, the previous theorem immediately yields Theorem~\ref{t:class-classification}. Indeed, given a \compbased{} class~$\Delta$ of relational structures or graphs, let $\Delta_C$ be the subclass of $\Delta$ consisting of connected structures only. We then set $\A$ to be a discrete subcategory of $\Rs$ or $\Graph$ consisting of one representative from every isomorphism class in $\Delta_C$. Since we picked only one representative from every equivalence class, the category $\A$ is small. Therefore, the density comonad $\DC_M$, for the inclusion functor $M\colon \A \to \Rs$, exists because both $\Rs$ and $\Graph$ have all (small) coproducts. Observe that the comonad $\DC_M$ classifies $\Delta$. Indeed, by Theorem~\ref{t:categ-classification}, a finite relational structure $B$ has a $\DC_M$-coalgebra if and only if there exist $C_1, \dots, C_n$ in $\Delta_C$ such that $B \cong C_1 + \dots + C_n$. In turn, this is equivalent to $B$ being in $\Delta$, which follows from being \compbased{} as then $C_1 + \dots + C_n$ is in $\Delta$ iff all the individual structures $C_1, \dots, C_n$ are.

\begin{remark}
    In the proof of Theorem~\ref{t:class-classification}, in the previous paragraph, we made sure that all objects in the image of $M$ are connected. This is stronger than assuming that $\A$ is \compbased{}. By carefully inspecting the proof of Theorem~\ref{t:categ-classification} and the preceding lemmas one can check that this extra assumption allows us to drop the requirement that $\B$ is \decomposable{}. See Lemma~\ref{l:connected-coalg-2} below for details.
\end{remark}

\begin{remark}
    Theorem~\ref{t:class-classification} says that, for a class $\Delta$ of structures closed under isomorphisms and finite coproducts, if $\Delta$ is also closed under summands then it can be classified by a comonad. However, the converse does not hold. Let $C$ and $D$ be two graphs with no homomorphism $C \to D$ nor any homomorphisms $D\to C$. This happens, for example, if $C$ is the triangle and $D$ is the cycle on five vertices. Take $\A$ to be the discrete subcategory of $\Graph$ consisting of $C+D$ only and let $M\colon \A \to \Graph$ be the subcategory inclusion. Then, despite $C+D$ admitting a $\DC_M$-coalgebra, no connected graph admits a $\DC_M$-coalgebra (by Lemma~\ref{l:connected-coalg}). It is easy to see that the class of finite structures classified by $\DC_M$ is the class consisting of graphs isomorphic to
    \[ C + \dots + C + D + \dots + D \]
    where both $C$ and $D$ appear in at least one copy in the coproduct. Consequently, the class of structures classified by $\DC_M$ is not closed under summands.
\end{remark}

\subsection{Examples}

The category $\Rs$ of $\sigma$-structures is a \decomposable{} category. Therefore, in our applications we only need to check that a class $\Delta$ of $\sigma$-structures is closed under finite coproducts and summands. These are fairly weak conditions, satisfied by many well-known examples of classes from the literature. In particular, this includes classes of finite structures closed under finite coproducts which are
\begin{enumerate}[label=\arabic*.]
\item monotone, i.e. class closed under taking substructures,
\item hereditary, i.e. class closed under taking induced substructures, or
\item closed under taking graph minors.
\end{enumerate}
Further examples include
\begin{enumerate}[label=\arabic*.]
    \setcounter{enumi}{3}
    \item Fra\"iss\'e classes closed under free amalgamations, or
    \item classes of coproducts of connected cores.
\end{enumerate}
Recall that a core is a structure with the property that all of its endomorphisms are automorphisms. An example of a class from (5) is the class of coproducts of cycles. Note that the discrete density comonad for this class captures co-spectrality, see Section~\ref{s:lovasz}.

As an example of a non-example, take the class of graphs that can be drawn on a surface of genus $n$, for $n>1$. This class is characterised by a finite set of forbidden minors. However, it is not closed under taking coproducts and hence is not a \compbased{} class. On the other hand, any minor-closed class can be completed under finite coproducts. The resulting class is then still minor closed \cite[Lemma 5]{buliandawar2017fixed} and hence is classified by a comonad.\footnote{We are grateful to Anuj Dawar for pointing out these facts.}

\begin{remark}
    The proof of Theorem~\ref{t:class-classification} is carried out abstractly, in the language of category theory, and thus can be dualised. In the dual statement we have monads instead of comonads and instead of \compbased{} classes we have classes closed under isomorphisms,  products, and \emph{factors}, i.e.\ with the property that if $A\times B$ is in the class then so are $A$ and $B$. For such classes there is a monad which classifies the class, i.e.\ a finite structure is in the class iff it admits an algebra for the monad. An example of a class of graphs which can be classified in this way is the class of connected non-bipartite graphs, cf.\ Chapter 8 in~\cite{hammack2011handbook}.
\end{remark}

\section{Graph parameters}
\label{s:graph-param}

A \emph{graph parameter} is a mapping $\mu\colon \GraphFin \to \ol\R$, from the class of finite graphs $\GraphFin$ to the class of extended real numbers $\ol\R = [-\infty,+\infty]$, which gives the same value to any two isomorphic graphs. Moreover, we say that it is \emph{standard}\footnote{Also known as \emph{maxing}, e.g.\ in \cite{lovasz2012networks}.
} if $\mu(G_1 + G_2) = \max\{\mu(G_1),\mu(G_2)\}$.

Standard graph parameters cover many well-known examples of graph parameters from the literature, such as

\begin{itemize}
    \item clique number, chromatic number, max-degree,
    \item tree-depth, tree-width, path-width, clique-width, etc.
\end{itemize}

In this section we show that every standard graph parameter $\mu$ gives rise to a \emph{graded comonad} $(\C_k)_k$, that is, a sequence of comonads $(\C_k)_k$ indexed by extended real numbers and comonad morphisms $g_{k,l}\colon \C_k \nattrans \C_l$, for every $k \leq l$ in $\ol\R$, such that $g_{k,l} = g_{j,l} \circ g_{k,j}$ for any $k\leq j \leq l$ in $\ol\R$.\footnote{In fact, this is a special type of graded comonad, with the grading being over the fixed monoid $(\ol\R,\min,+\infty)$. For details see~\cite{abramskyshah2021relating}.}
Given a graded comonad $(\C_k)_k$ we define the \emph{coalgebra number} $\kappa^\C(G)$, of a graph $G$, to be the infimum of $k\in \ol\R$ such that $G$ admits a $\C_k$-coalgebra \cite{abramskydawarwang2017pebbling,abramskyshah2021relating}.
We show that the coalgebra number for the constructed graded comonad agrees with the standard graph parameter $\mu$ we started with. In other words, we have that $\mu(G) \leq k$ iff $G$ admits a $\C_k$-coalgebra.

Note that every graded comonad $(\C_k)_k$ trivially determines a graph parameter, by setting $\mu(G) := \kappa^\C(G)$. We have already mentioned graded comonads characterising graph parameters this way. For example, the (gra\-ded) Ehrenfeucht–Fra\"iss\'e comonad $(\Ek)_k$ characterises tree-depth~\cite{abramskyshah2021relating}, the pebbling comonad $(\Pk)_k$ characterises tree-width~\cite{abramskydawarwang2017pebbling} and the pebble-relation comonad $(\PRk)_k$ classifies path-width~\cite{montacuteshah2021pebble}.

To start with, observe that there is a one-to-one correspondence between \emph{graph properties}, i.e. graph parameters valued in $\{0,1\}$, and classes of finite graphs which are closed under isomorphisms. Furthermore, it is easy to see that the correspondence restricts to that of standard graph properties and \compbased{} classes of graphs:
\begin{lemma}
    Given a standard graph property $\mu\colon \GraphFin \to \{0,1\}$, the class of graphs $G$ such that $\mu(G) = 0$ is closed under isomorphisms, finite coproducts, and summands. In fact, every such class is obtained from a standard graph property this way.
    \qed
\end{lemma}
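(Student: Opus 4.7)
The plan is to write down the two mutually inverse assignments between standard $\{0,1\}$-valued graph properties and \compbased{} classes, and verify that the closure conditions match the $\max$ axiom axiomatising standardness. Both directions are bookkeeping, so I will state the assignments explicitly and then run through the three conditions.

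For the forward direction, given a standard graph property $\mu\colon \GraphFin \to \{0,1\}$, I set $\Delta_\mu := \{G \in \GraphFin : \mu(G) = 0\}$ and verify it is \compbased{}. Isomorphism closure is immediate from $\mu$ being a graph parameter. For closure under binary coproducts, if $G_1, G_2 \in \Delta_\mu$ then standardness gives $\mu(G_1 + G_2) = \max\{\mu(G_1), \mu(G_2)\} = 0$. For closure under summands, if $G_1 + G_2 \in \Delta_\mu$ then $\max\{\mu(G_1), \mu(G_2)\} = 0$, and since $\mu$ takes values in $\{0,1\}$ both summands must satisfy $\mu(G_i) = 0$.

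Conversely, given a \compbased{} class $\Delta$, I would define $\mu_\Delta(G) := 0$ if $G \in \Delta$ and $\mu_\Delta(G) := 1$ otherwise. Isomorphism-invariance is immediate from closure of $\Delta$ under isomorphisms. The standardness identity $\mu_\Delta(G_1 + G_2) = \max\{\mu_\Delta(G_1), \mu_\Delta(G_2)\}$ reduces to the equivalence
\[ G_1 + G_2 \in \Delta \iff G_1 \in \Delta \text{ and } G_2 \in \Delta, \]
whose right-to-left implication is closure under finite coproducts and whose left-to-right implication is closure under summands.

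Finally, the ``In fact'' clause is witnessed by the identity $\Delta_{\mu_\Delta} = \Delta$, which is immediate from the definition of $\mu_\Delta$. I anticipate no real obstacle: the entire proof is an unpacking of definitions. The only minor subtlety worth acknowledging is the empty coproduct, which is handled either by reading ``finite coproducts'' as the binary case (closed under associativity) or by noting that, from $G \cong G + \emptyset$ and standardness, $\mu(\emptyset) \le \mu(G)$ for every $G$, so $\mu(\emptyset) = 0$ as soon as $\Delta_\mu$ is nonempty.
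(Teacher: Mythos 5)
Your proof is correct and is exactly the routine unpacking of definitions that the paper itself treats as immediate (the lemma is stated with no proof, just \qed): the forward direction reads off the three closure conditions from the $\max$ identity, and the converse inverts the correspondence via the indicator function. Your remark about the empty coproduct is a reasonable extra precaution but not needed for the argument.
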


Therefore, by Theorem~\ref{t:class-classification}, there is a comonad $\C^\mu$ which classifies the class~$\Delta$ of finite graphs $G$ such that $\mu(G) = 0$, for every standard graph property $\mu$. We construct $\C^\mu$ explicitly, as the pointwise density comonad for the inclusion functor
\begin{align}
    \A \to \Graph,
    \label{eq:param-inclus-func}
\end{align}
where $\A$ is a discrete subcategory of finite connected graphs consisting precisely of one graph from every isomorphism class in $\Delta$. Then, by Theorem~\ref{t:categ-classification}, $\C^\mu$ classifies $\Delta$.

\subsection{Grading graph parameters}
We use this to construct a sequence of comonads for a given standard graph parameter $\mu$. For every extended real number $k$, we turn $\mu$ into a graph property
\[ \mu_{\leq k}\colon \Graph \to \{0,1\} \]
by setting $\mu_{\leq k}(G) = 0$ iff $\mu(G) \leq k$. Then, the density comonad $\C_k^\mu$, defined as~$\C^\mu$ for $\mu := \mu_{\leq k}$, classifies finite graphs $G$ such that $\mu(G) \leq k$.

Moreover, we can make sure that there is a linearly ordered chain of embeddings of discrete categories
\[ \A_{-\infty} \hookrightarrow \dots \hookrightarrow \A_k \hookrightarrow \A_l \hookrightarrow \dots \hookrightarrow \A_{+\infty} \qquad\text{(with $k\leq l$)}\]
where each $\A_k$ is a category as in \eqref{eq:param-inclus-func}, for the class of graphs $G$ such that $\mu(G) \leq k$. Then, by Lemma~\ref{l:comonad-morphism} in Section~\ref{s:composed-functors}, the composite
\[ \A_k \hookrightarrow \A_{l} \to \Graph, \]
for $k \leq l$, gives rise to a comonad morphism $g_{k,l}\colon \C^\mu_k \Rightarrow \C^\mu_{l}$. In fact, we have $g_{k,l} = g_{j,l} \circ g_{k,j}$ for every $k \leq j \leq l$, by functoriality of $\DC_{(-)}$. Hence, $(\C^\mu_k)_k$ is a graded comonad with the property that $\kappa^{\C^\mu}(G) = \mu(G)$ for every finite graph~$G$.

\begin{remark}
The procedure to produce sequences of comonads for standard graph parameters can be defined dually for graph parameters $\mu$ with the property that $-\mu$ is standard, i.e. graph parameters such that $\mu(G_1 + G_2) = \min\{\mu(G_1),\mu(G_2)\}$. This is done by constructing a sequence of standard graph properties
\[ \mu_{\geq k}\colon \Graph \to \{0,1\} \]
and inducing comonads classifying the classes of graphs such that $\mu(G) \geq k$ in a similar spirit as before. This then covers examples of graph parameters such as min-degree and girth.
\end{remark}

\subsection{Comparison with game comonads}
For some graph parameters and classes of structures we already knew how to construct comonads that classify them. In particular, this holds for the classes of structures classified by the comonads $\Pk$, $\Ek$, and $\PRk$. In this section, we explain the relationship between those comonads and discrete density comonads constructed directly for given classes.

In fact, we show that discrete density comonads are weakly initial in the category of comonads that classify the same class. To this end, denote by $\DC_\Delta$ the discrete density comonad constructed as in \eqref{eq:param-inclus-func} above, for a \compbased{} class $\Delta$.

\begin{proposition}
    \label{p:weak-initial}
    Let $\Delta$ be a \compbased{} class of relational structures or graphs and let $\C$ be a comonad that classifies a class $\Gamma$. Then, $\Delta \subseteq \Gamma$ if, and only if, there exists a comonad morphism $\DC_\Delta \nattrans \C$.
\end{proposition}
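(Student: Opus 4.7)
The plan is to prove the two implications separately.

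For $(\Leftarrow)$, suppose $\lambda\colon \DC_\Delta \nattrans \C$ is a comonad morphism. By the equivalence recalled in Section~\ref{s:comonad-morph}, $\lambda$ induces a functor $L\colon \EM(\DC_\Delta) \to \EM(\C)$ commuting with the forgetful functors. Every $A\in \Delta$ admits a $\DC_\Delta$-coalgebra by Theorem~\ref{t:class-classification}, so applying $L$ produces a $\C$-coalgebra on the same underlying object, giving $A \in \Gamma$ and hence $\Delta \subseteq \Gamma$.

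For $(\Rightarrow)$, write $M\colon \A \to \B$ for the inclusion of~\eqref{eq:param-inclus-func}, so that $\DC_\Delta = \DC_M$. For each $A \in \A$ the object $M(A)$ is a finite connected structure in $\Delta \subseteq \Gamma$, so by the classification property of $\C$ it admits a $\C$-coalgebra $\alpha_A\colon M(A) \to \C M(A)$; choose one for each $A$. Because $\A$ is discrete, naturality is vacuous, so the family $(\alpha_A)_{A\in\A}$ assembles into a natural transformation $\alpha\colon M \nattrans \C M$. By initiality of $\eta\colon M \nattrans \DC_M M$, there exists a unique natural transformation $\lambda\colon \DC_M \nattrans \C$ with $\lambda M \circ \eta = \alpha$, and the remaining task is to verify that $\lambda$ is a comonad morphism.

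Both diagrams in~\eqref{e:comonad-morphisms} will be verified by the same tactic: precompose both sides (which are natural transformations out of $\DC_M$) with $\eta$, simplify, and invoke the uniqueness clause of the density property to conclude equality of the original transformations. For the counit square, the left side composed with $\eta$ equals $\epsilon^\C M \circ \alpha = \id_M$ by the triangle law of the chosen coalgebras $\alpha_A$, matching $\epsilon^{\DC_M} M \circ \eta = \id_M$ from~\eqref{eq:eta-comonad}. For the comultiplication square, a short calculation using \eqref{eq:eta-comonad}, the naturality of $\lambda$, and $\lambda M \circ \eta = \alpha$ reduces the right-hand side composed with $\eta$ to $\C(\alpha) \circ \alpha$, while the left-hand side reduces to $\delta^\C M \circ \alpha$; these agree exactly by the square law of coalgebras applied pointwise to $\alpha_A$.

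I do not expect a genuine obstacle: the density property of $\DC_M$ carries the proof, and the two coalgebra axioms for the $\alpha_A$ are used precisely where needed, one per diagram. The only subtlety worth noting is that the construction of $\lambda$ depends on the non-canonical choice of the coalgebras $\alpha_A$, which is why the proposition asserts only the \emph{existence} of a comonad morphism rather than a canonical one.
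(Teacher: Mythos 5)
Your proposal is correct and follows essentially the same route as the paper: the backward direction via the induced functor $L\colon \EM(\DC_\Delta)\to\EM(\C)$ over $\B$, and the forward direction by assembling chosen coalgebras on the $M(A)$ into a natural transformation $\varphi\colon M\nattrans \C M$ (naturality being vacuous for discrete $\A$), factoring it through $\eta$ by initiality, and verifying the two comonad-morphism laws by precomposing with $\eta$ and appealing to the coalgebra axioms plus the uniqueness clause of the density property. This is precisely the argument of the paper's appendix lemma, so no further comparison is needed.
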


Observe that the right-to-left direction is immediate as a comonad morphism $\DC_\Delta \nattrans \C$ lifts to a functor $L\colon \EM(\DC_\Delta) \to \EM(\C)$ making the following diagram commute (cf.\ Section~\ref{s:comonad-morph}).
\[
\begin{tikzcd}
    \EM(\DC_\Delta) \ar[swap]{rd}{U^{\DC_\Delta}} \ar{rr}{L} & & \EM(\C) \ar{ld}{U^\C} \\
& \B
\end{tikzcd}
\]
(Here $\B$ is either the category of relational structures or graphs.) For a structure~$B$ in $\Delta$, let $\beta\colon B \to \DC_\Delta(B)$ be a $\DC_\Delta$-coalgebra, which exists because $\DC_\Delta$ classifies~$\Delta$. Then, by the commutativity of the above triangle $L(\beta)$ is a $\C$-coalgebra $B \to \C(B)$ making $B \in \Gamma$ because $\C$ classifies $\Gamma$.

We carry out the left-to-right direction of the proof abstractly, for arbitrary categories rather than just relational structures or graphs. Let $\D := \DC_M$ be the density comonad of a functor $M\colon \A \to \B$ from a discrete category $\A$.
Further, assume that $\C$ is a comonad on $\B$ such that for every $A\in \A$, there exists a coalgebra
\[ \varphi_A\colon M(A)\to \C(M(A)).\]
Observe that, since $\A$ is a discrete category, the collection of morphisms $\{ \varphi_A \mid A\in \A\}$ trivially forms a natural transformation $\varphi\colon M \Rightarrow \C M$. Since $\D$ is a density comonad of $M$, there is a natural transformation $\varphi^*\colon \D \Rightarrow \C$ such that
\begin{align}
    \varphi = \varphi^* M \circ \eta
    \label{e:varphis}
\end{align}
where $\eta\colon M \nattrans \D M$ is the initial natural transformation determining $\D$ (cf.\ Section~\ref{s:density-comonads}). Then, Proposition~\ref{p:weak-initial} follows from the following lemma, which is a direct consequence of
    Theorem II.1.1 in~\cite{dubuc2006kan}. We include its proof in the appendix for completeness.
\begin{restatable}{lemma}{initialMorphism}
    $\varphi^*\colon \D \nattrans \C$ is a morphism of comonads.
\end{restatable}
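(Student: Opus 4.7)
The plan is to invoke the universal property of $\D = \DC_M$ as a density comonad: since $\A$ is discrete, every natural transformation $\D \Rightarrow K$ (for any endofunctor $K$ of $\B$) is uniquely determined by its whiskering along $M$ precomposed with $\eta$. Applied to the two diagrams in~\eqref{e:comonad-morphisms} with $\lambda := \varphi^*$, this reduces each axiom to an equality of natural transformations $M \Rightarrow K M$ for the appropriate $K$. So the proof proceeds by whiskering each axiom with $M$, precomposing with $\eta$, and verifying the resulting equalities componentwise using the $\C$-coalgebra axioms satisfied by each $\varphi_A$.

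For the counit axiom $\epsilon^\C \circ \varphi^* = \epsilon^\D$, the left-hand whiskering is $\epsilon^\C M \circ \varphi^* M \circ \eta = \epsilon^\C M \circ \varphi$ by~\eqref{e:varphis}, whose component at $A \in \A$ is $\epsilon^\C_{M(A)} \circ \varphi_A = \id_{M(A)}$ by the triangle coalgebra law for $\varphi_A$. The right-hand whiskering is $\epsilon^\D M \circ \eta = \id_M$ by~\eqref{eq:eta-comonad}. The two agree, so uniqueness forces the axiom. For the comultiplication axiom $\delta^\C \circ \varphi^* = (\varphi^* \C) \circ (\D \varphi^*) \circ \delta^\D$, whiskering the left-hand side gives $\delta^\C M \circ \varphi$, which componentwise equals $\C(\varphi_A) \circ \varphi_A$ by the square coalgebra law. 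Whiskering the right-hand side gives $(\varphi^* \C M) \circ (\D \varphi^* M) \circ (\delta^\D M \circ \eta)$; substituting $\delta^\D M \circ \eta = \D\eta \circ \eta$ from~\eqref{eq:eta-comonad} and then using functoriality $\D\varphi^* M \circ \D\eta = \D(\varphi^* M \circ \eta) = \D\varphi$, this becomes $\varphi^* \C M \circ \D\varphi \circ \eta$. Applying naturality of $\varphi^*$ at the morphism $\varphi_A\colon M(A) \to \C M(A)$ yields $\varphi^*_{\C M(A)} \circ \D(\varphi_A) = \C(\varphi_A) \circ \varphi^*_{M(A)}$, so the expression further simplifies to $\C\varphi \circ \varphi^* M \circ \eta = \C\varphi \circ \varphi$, matching the left-hand side componentwise.

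The main obstacle is purely bookkeeping: tracking whiskerings cleanly and invoking the defining identities~\eqref{eq:eta-comonad} for $\epsilon^\D$ and $\delta^\D$ at the right moments. The only genuinely conceptual step is noticing that naturality of $\varphi^*$ must be instantiated precisely at the coalgebra structure map $\varphi_A$, which is what converts $\D(\varphi_A)$ into $\C(\varphi_A)$ and exposes the $\C$-coalgebra square law on both sides of the equation.
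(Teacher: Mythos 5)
Your proposal is correct and follows essentially the same route as the paper's proof: reduce both comonad-morphism axioms to equalities of transformations $M \Rightarrow KM$ via the universal property of $\eta$, then verify them using \eqref{eq:eta-comonad}, \eqref{e:varphis}, the coalgebra laws for each $\varphi_A$, and naturality of $\varphi^*$ at $\varphi_A$. (One small remark: the uniqueness you invoke is the universal property of the density comonad itself, not a consequence of $\A$ being discrete --- discreteness is only needed earlier to see that the $\varphi_A$ assemble into a natural transformation.)
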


\begin{example}
    Proposition~\ref{p:weak-initial} gives us that that for our running examples of comonads $\Ek$, $\Pk$, $\PRk$, there exist comonad morphisms
    \[ \DC_{\mathcal{TD}_k} \nattrans \Ek, \quad \DC_{\mathcal{TW}_k} \nattrans \Pk, \qtq{and} \DC_{\mathcal{PW}_k} \nattrans \PRk,\]
    where $\mathcal{TD}_k$, $\mathcal{TW}_k$, and $\mathcal{PW}_k$ are the classes of finite structures of tree-depth, tree-width, and path-width ${\leq}k$, respective.

    Note that unlike  the game comonads $\Ek$, $\Pk$, and $\PRk$, the discrete density comonads $\DC_{\mathcal{TD}_k}$, $\DC_{\mathcal{TW}_k}$, and $\DC_{\mathcal{PW}_k}$ do not classify infinite structures with the corresponding properties.
\end{example}

\subsection{Nowhere dense comonads}
A direct consequence of Proposition~\ref{p:weak-initial} is that we can characterise comonads that classify monotone nowhere dense classes of graphs in terms of non-existence of certain comonad morphisms. Recall that a class $\Delta$ is \emph{somewhere dense} if there exists a natural number $p$ such that, for every $n$, the $p$-th subdivision $K^p_n$ of all edges in the clique graph $K_n$ on $n$ vertices is a subgraph of some graph in $\Delta$.\footnote{A subdivision of a set of edges in a graph replaces each edge in the set by a path of length 2 through a new vertex.} Then, a class is \emph{nowhere dense}  \cite{nevsetvril2011nowhere} if it is not somewhere dense.

It is immediate that a monotone class of graphs $\Delta$ (i.e.\ a class closed under substructures) is somewhere dense if and only if $\Cli_p \subseteq \Delta$, for some $p$, where
\[ \Cli_p = \{ K_n^p \mid n\in \mathbb N \} \]
is the class of $p$-th subdivisions of all cliques. We can now state the characterisation.

\begin{proposition}
    Assume $\C$ classifies a monotone class of graphs $\Delta$. Then, $\Delta$ is nowhere dense if, and only if, there is no comonad morphism $\DClip \nattrans \C$ for any $p\in \mathbb N$.
\end{proposition}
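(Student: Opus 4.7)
The plan is to reduce both sides of the biconditional to the single condition ``$\Cli_p \subseteq \Delta$ for some $p$'', and then take contrapositives. First, I would exploit monotonicity to rewrite somewhere-density: any witness $K_n^p \subseteq G \in \Delta$ to somewhere-density promotes $K_n^p$ itself into $\Delta$, so $\Delta$ is somewhere dense iff there is $p$ with $K_n^p \in \Delta$ for every $n$, i.e.\ $\Cli_p \subseteq \Delta$; the reverse implication is immediate, as each $K_n^p$ is a subgraph of itself. Consequently, $\Delta$ is nowhere dense iff $\Cli_p \not\subseteq \Delta$ for every~$p$.

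Second, I would show that for each fixed~$p$ the existence of a comonad morphism $\DClip \nattrans \C$ is equivalent to $\Cli_p \subseteq \Delta$. This is essentially Proposition~\ref{p:weak-initial}, applied with $\Cli_p$ (viewed as a discrete category, with inclusion $M\colon \Cli_p \hookrightarrow \Graph$) playing the role of $\A$. For one direction, a comonad morphism $\DClip \nattrans \C$ lifts to a functor $L\colon \EM(\DClip) \to \EM(\C)$ over the forgetful functor (see Section~\ref{s:comonad-morph}); applying $L$ to the canonical coalgebra $\eta_{K_n^p}\colon K_n^p \to \DClip(K_n^p)$ provided by the $M\fl$ construction yields a $\C$-coalgebra on each $K_n^p$, so $K_n^p \in \Delta$. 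For the other, discreteness of $\Cli_p$ makes any choice of $\C$-coalgebras $\varphi_{K_n^p}\colon K_n^p \to \C(K_n^p)$ into a natural transformation $\varphi\colon M \nattrans \C M$, whose universal factorisation $\varphi = \varphi^* M \circ \eta$ (by the defining property of the density comonad $\DClip$) delivers the desired $\varphi^*\colon \DClip \nattrans \C$, shown to be a comonad morphism by the lemma immediately following Proposition~\ref{p:weak-initial}.

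Concatenating the two equivalences and negating gives the proposition. The one subtle point to flag is that $\Cli_p$ itself is not componental: it is closed under summands (since each $K_n^p$ is connected) but not under finite coproducts, so Proposition~\ref{p:weak-initial} cannot be cited verbatim. Nevertheless, its proof applies here without modification, because the ``$\Delta \subseteq \Gamma \Rightarrow$ morphism'' half is already established abstractly in the excerpt for an arbitrary discrete domain category, and the ``morphism $\Rightarrow \Delta \subseteq \Gamma$'' half only uses that each object of the generating discrete category carries a canonical density coalgebra, which is precisely what the $M\fl$ construction provides. I expect checking this mild rephrasing of the cited proposition to be the only point requiring attention.
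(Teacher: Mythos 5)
Your proof is correct and follows the same overall strategy as the paper's: both reduce somewhere-density of the monotone class $\Delta$ to the condition $\Cli_p \subseteq \Delta$ for some $p$, and then translate that containment into the existence of a comonad morphism $\DClip \nattrans \C$ via weak initiality. The one place you diverge is in handling the fact that $\Cli_p$ is not \compbased{}. The paper sidesteps this by passing to the closure $\ol{\Cli_p}$ of $\Cli_p$ under finite coproducts, which \emph{is} \compbased{} and is classified by the same comonad $\DClip$ (its connected objects being exactly those of $\Cli_p$), and then noting that $\Cli_p \subseteq \Delta$ iff $\ol{\Cli_p} \subseteq \Delta$ because any class classified by a comonad is closed under finite coproducts; this lets Proposition~\ref{p:weak-initial} be cited verbatim. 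You instead re-run the proof of Proposition~\ref{p:weak-initial} directly on the non-\compbased{} generating class $\Cli_p$, observing correctly that neither direction of that proof uses componentality: extracting a $\C$-coalgebra on each $K_n^p$ from a comonad morphism only needs the canonical coalgebras $\eta_{K_n^p}$ supplied by the $M\fl$ construction, and building the morphism from a family of $\C$-coalgebras only needs discreteness of the domain plus the universal property of the density comonad. Both fixes are sound; the paper's keeps the proposition as a black box, while yours is slightly more self-contained at the cost of re-inspecting the earlier proof.
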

\begin{prf}
    Define $\ol{\Cli_p}$ to be the closure of $\Cli_p$ under finite coproducts. Observe that $\ol{\Cli_p}$ is \compbased{} and, since the connected objects in $\ol{\Cli_p}$ are precisely the objects in $\Cli_p$, the comonad $\DClip$ classifies $\ol{\Cli_p}$. Moreover, since any class classified by a comonad needs to be closed under finite coproducts, $\Cli_p \subseteq \Delta$ iff $\ol{\Cli_p} \subseteq \Delta$. The result follows by monotonicity of $\Delta$ and by Proposition~\ref{p:weak-initial}.
\end{prf}

\section{Lov\'asz-type theorems for free}
\label{s:lovasz}

A classic result of Lov\'{a}sz~\cite{Lovasz1967} says that two finite structures are isomorphic if and only if they admit the same number of homomorphisms from all finite structures. This result has been extended in many different ways. In one type of generalisation, isomorphisms are replaced by a selected equivalence relation $\asymp$ on finite structures, and the class of all finite structures by a class of selected finite structures $\Delta$. Then a typical Lov\'asz-type theorem expresses that, for finite structures $A,B$,
\[ A \asymp B \enspace\iff\enspace \hom(C,A) \cong \hom(C,B) \quad\text{for every } C\in \Delta. \]

A number of well-known equivalence relations on finite structures have been characterised in this way. See Figure~\ref{fig:lovasz-theorems} for an overview of some Lov\'asz-type results.

\begin{figure}[ht]
    \begin{center}
    \begin{tabular}{l|l|l}
        $\Delta$ & $\asymp$ & reference \\
        \hline
        cycles & co-spectrality & (folklore) \\[0.5em]
        trees & fractional isomorphism & \cite{Ramana1994fractional} \\[0.5em]
        bipartite graphs & isomorphic bipartite double covers & \cite{boker2018thesis,lovasz2012networks} \footnotemark \\[0.5em]
        planar graphs & quantum isomorphism & \cite{manvcinskaroberson2020quantum} \\[0.5em]
        tree-depth ${\leq}\, k$ & Duplicator wins the bijective & \cite{grohe2020counting} \\
         & $k$-round \ef{} game & \\[0.5em]
        tree-width ${<}\, k$ & Duplicator wins the bijective  & \cite{dvovrak2010recognizing} \\
         & $k$-pebble game & \\[0.5em]
        path-width ${<}\, k$ & Duplicator wins the bijective & \cite{montacuteshah2021pebble}  \\
         & $k$-pebble relation game & \\[0.5em]
        admitting a $k$-pebble & Duplicator wins the bijective & \cite{dawarjaklreggio2021lov} \\
        tree cover of height ${\leq}\,n$ & $n$-round $k$-pebble E.F.\ game & \\[0.5em]
        synchronization & equivalence in graded modal logic & \cite{dawarjaklreggio2021lov} \\
        trees of height ${\leq}\,k$ & of modal depth ${\leq}\,k$ & \\[0.5em]
        an inner-product & existence of a certain unitary map & \cite{grohe2021homomorphism} \\
        compatible class & between homomorphism tensor spaces &
    \end{tabular}
    \caption{Examples of Lov\'asz-type theorems}
    \label{fig:lovasz-theorems}
    \end{center}
\end{figure}
\footnotetext{
    The \emph{bipartite double cover} of a graph $G$ is the product graph $G \times K_2$ where $K_2$ is the clique on two vertices.

    The fact that isomorphic bipartite double covers correspond to counting homomorphisms from bipartite graphs was worked out by B\"oker in his master thesis \cite{boker2018thesis}. He later observed (in private communication) that the same result already follows from Section 5.4.2 in \cite{lovasz2012networks}.
}

Note that the equivalence relations $\asymp$ in Figure~\ref{fig:lovasz-theorems} corresponding to winning strategies of Duplicator can be equivalently described as logical equivalences with respect to a fragment of first-order logic with counting quantifiers. A comonadic proof of the first two Lov\'asz-type theorems that identify a logic fragment was established in \cite{dawarjaklreggio2021lov} and later adapted in \cite{montacuteshah2021pebble} to obtain the new result for path-width. For the comonadic proof to work it is necessary that the comonad $\C$ classifies the  relation $\asymp$, i.e.\ that $A \asymp B$ holds precisely whenever the cofree coalgebras $F^\C(A)$ and $F^\C(B)$ are isomorphic.

Anuj Dawar has asked (in private communication) whether there are comonads covering the other listed cases as well. In our terminology, this means finding comonads that classify both the class $\Delta$ as well as the corresponding $\asymp$ relation, in the same row of the table. We answer this question in the positive for any \compbased{} class $\Delta$. We show that the discrete density comonad $\C$ that classifies such class also classifies the relation $\asymp$, with $A \asymp B$ whenever $\hom(C,A)\cong \hom(C,B)$ for every $C\in \Delta$, thereby proving Corollary~\ref{c:relation-classification}.

The main ingredient of our proof is the following recent result due to Luca Reggio, proved abstractly for locally finitely presentable categories.

\begin{theorem}[Corollary 5.15 in \cite{reggio2021polyadic}]
    \label{t:luca}
    Let $\C$ be a comonad of finite rank on $\Graph$ or $\Rs$. Then, for two finite structures $A,B$,
    \[ F^\C(A) \cong F^\C(B) \qtq{iff} \hom(C,A) \cong \hom(C,B), \]
    for every finite $C$ which admits a $\C$-coalgebra.
\end{theorem}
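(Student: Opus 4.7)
The plan is to split the argument by direction: the forward implication is an adjunction calculation, while the reverse carries the real content and crucially uses finite rank.

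For the forward direction, suppose $F^{\C}(A) \cong F^{\C}(B)$. The free--forgetful adjunction $U^{\C} \dashv F^{\C}$ gives, for any coalgebra $(C,\gamma)$, a natural bijection
\[ \hom_{\EM(\C)}\bigl((C,\gamma),\, F^{\C}(A)\bigr) \;\cong\; \hom(C, A), \]
and likewise for $B$. Composing with the given isomorphism on the right-hand side yields a bijection $\hom(C, A) \cong \hom(C, B)$ for each finite $C$ admitting a $\C$-coalgebra.

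For the reverse direction, I would use that finite rank makes $\EM(\C)$ locally finitely presentable, whose finitely presentable objects are (up to the appropriate size bound) coalgebras on finite underlying structures. Each cofree coalgebra $F^{\C}(A)$ is then the canonical filtered colimit of the finitely presentable coalgebras mapping into it, so $F^{\C}(A)$ is recovered, up to isomorphism, from its restricted Yoneda embedding $(C,\gamma) \mapsto \hom_{\EM(\C)}((C,\gamma), F^{\C}(A)) \cong \hom(C, A)$ on the small subcategory of finitely presentable coalgebras. It thus suffices to produce a natural isomorphism between the presheaves $(C,\gamma) \mapsto \hom(C, A)$ and $(C,\gamma) \mapsto \hom(C, B)$ on this subcategory.

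The main obstacle is that the hypothesis supplies only pointwise bijections $\hom(C, A) \cong \hom(C, B)$, not a natural family, and bare cardinality equalities need not glue into a natural transformation. The standard device to bridge this gap is a categorical M\"obius inversion: the hom-counts $\hom(C, -)$ and surjective (regular-epi) hom-counts $\mathrm{sur}(C, -)$ are related by a triangular invertible transformation indexed by the poset of regular quotients of $C$, whose matrix depends only on $C$. Thus equality of hom-counts for every finite $C$ with a coalgebra forces equality of surjective-hom counts, and these more rigid counts assemble coherently along coalgebra morphisms --- which relies on regular epis between finitely presentable coalgebras remaining well-behaved, a point where the finite-rank hypothesis is essential. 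Splicing the resulting canonical bijections together yields a natural isomorphism of the restricted representables, and density concludes $F^{\C}(A) \cong F^{\C}(B)$.
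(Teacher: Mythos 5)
First, a point of comparison: the paper does not prove this statement at all --- it is imported verbatim as Corollary 5.15 of \cite{reggio2021polyadic}, and the surrounding text only verifies its hypothesis (finite rank) for the discrete density comonads. So your attempt can only be measured against the cited source. Your forward direction is correct and is the easy half: choosing any coalgebra structure $\gamma$ on $C$, the adjunction $U^\C \dashv F^\C$ gives $\hom(C,A)\cong\hom_{\EM(\C)}((C,\gamma),F^\C(A))$, and an isomorphism $F^\C(A)\cong F^\C(B)$ transports this to $\hom(C,B)$.

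The reverse direction has a genuine gap, precisely at the point you yourself flag as the main obstacle. You correctly note that the hypothesis supplies only unnatural, pointwise bijections; but your proposed fix --- M\"obius inversion produces surjective-hom counts that ``assemble coherently'' into a natural isomorphism of the restricted representables --- is asserted rather than proved, and it does not hold as stated. Equality of surjection (or injection) counts is still just an equality of cardinalities for each finitely presentable coalgebra separately; there is no canonical family of bijections to splice, so naturality does not materialise and density of the finitely presentable objects cannot be invoked to conclude. The way such arguments actually close --- in Lov\'asz's original proof and in its categorical generalisation in \cite{reggio2021polyadic} --- is different: M\"obius inversion over the finite poset of quotients converts hom-counts into counts of embeddings/strong subobjects, and one then concludes by mutual embeddability plus finiteness, which for the (typically infinite) cofree coalgebras takes the form of a back-and-forth construction along the directed diagrams of finitely presentable subcoalgebras of $F^\C(A)$ and $F^\C(B)$, not an isomorphism of presheaves. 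Relatedly, you use finite rank only to make $\EM(\C)$ locally finitely presentable, but the counting steps also need the relevant hom-sets and quotient posets of finitely presentable coalgebras to be finite; this is exactly where conditions (2) and (3) in the paper's definition of finite rank --- identifying the finitely presentable coalgebras as those with finite carrier --- do their work, and your sketch leaves that dependence implicit.
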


We see that in order to prove Corollary~\ref{c:relation-classification}, it is enough to show that the comonad $\C$ constructed in the proof of Theorem~\ref{t:class-classification} has finite rank. In the next section we define finite rank comonads and show that the discrete density comonad constructed in the proof of Theorem~\ref{t:class-classification} does have finite rank.

\subsection{Finite rank comonads}

We work in the general setting of categories, rather than the more concrete setting of relational structures or graphs. Recall that an object $C$ of a category $\B$ is \emph{finitely presentable} \cite{adamekrosicky1994locally} if the functor $\hom(C,-)$ preserves filtered colimits.

Let $\C$ be a comonad over a category $\B$ and let $U\colon \EM(\C) \to \B$ be the usual forgetful functor. We say that a comonad $\C$ has \emph{finite rank} if
\begin{enumerate}
    \item $\C$ is \emph{finitary}, i.e. its underlying functor preserves filtered colimits,
    \item if every morphism of the form $f\colon C\to U(\xi)$, from a finitely presentable $C$, admits a factorisation
    \[ f = C \xrightarrow{f_0} U(\xi_0) \xrightarrow{U(\gamma)} U(\xi) \]
    for some $\gamma\colon Y\to X$ such that $U(\xi_0)$ is finitely presentable, and
    \item this factorisation is essentially unique, i.e. if $g\colon C\to U(\xi_0)$ satisfies $f = U(\gamma) \circ g$ then for some factorisation of $\gamma$ into $\lambda\colon \xi_0\to \xi_0'$ and $\gamma'\colon \xi_0' \to \xi$ such that $U(\xi_0')$ is finitely presentable, $U(\lambda)\circ f_0 = U(\lambda) \circ g$.
\end{enumerate}

Observe that if $U(\gamma)$ in the second item is a monomorphism, then essential uniqueness is automatic. In fact, this is the case in our construction.

In the following we fix a functor
\[ M\colon \A \to \B \]
from a discrete category $\A$ and assume that the pointwise density comonad $\DC_M$ for $M$ exists.

\begin{restatable}{proposition}{finitaryComonad}
    \label{p:finitary-comonad}
    If all objects in the image of $M$ are finitely presentable, then $\DC_M$ is finitary.
\end{restatable}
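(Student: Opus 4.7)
The plan is to exploit the explicit coproduct formula \eqref{e:discrete-formula} for discrete density comonads and decompose $\DC_M$ into a composition of functors known to preserve filtered colimits. Writing $S\cdot X = \coprod_{s\in S} X$ for the copower of an object $X$ by a set $S$, the formula can be rewritten as
\[
    \DC_M(B) \;\cong\; \coprod_{A \in \A} \hom(M(A),\, B) \cdot M(A),
\]
so verifying that $\DC_M$ preserves filtered colimits reduces to checking that each of the three operations $\hom(M(A),-)$, $-\cdot M(A)$, and $\coprod_{A\in\A}$ commutes with filtered colimits.

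First, the hypothesis that every $M(A)$ is finitely presentable is exactly the statement that $\hom(M(A),-)\colon\B\to\mathbf{Set}$ preserves filtered colimits. Second, the copower functor $-\cdot M(A)\colon\mathbf{Set}\to\B$ is left adjoint to $\hom(M(A),-)$ (its values are coproducts of the form $\coprod_{S}M(A)$, which exist because they appear as summands of $\DC_M(B)$), and any left adjoint preserves all colimits, filtered ones in particular. Third, the functor $\coprod_{A\in\A}\colon\B^{\A}\to\B$ is itself a colimit along the discrete diagram $\A$, and colimits commute with colimits, so it preserves filtered colimits computed pointwise. Chaining these three observations yields that $\DC_M$ preserves filtered colimits.

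The only care needed is in identifying the canonical comparison morphism $\colim_i \DC_M(D(i))\to \DC_M(\colim_i D(i))$, induced by the universal property of the colimit from the maps $\DC_M(\kappa_i)$, with the isomorphism produced by this three-step decomposition. Concretely, for a filtered diagram $D\colon I\to\B$ with colimit cocone $(\kappa_i\colon D(i)\to B)_i$, finite presentability of $M(A)$ says that every morphism $f\colon M(A)\to B$ factors essentially uniquely as $\kappa_i \circ g$ for some $g\colon M(A)\to D(i)$, producing a bijection between the indexing set of the outer coproduct in $\DC_M(B)$ and $\colim_i \hom(M(A), D(i))$ that matches the inclusions $\iota_f$ with those induced by $\DC_M(\kappa_i)$. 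No deeper obstacle is anticipated: once the copower decomposition is in place, the argument is essentially an unwinding of the definition of finite presentability together with the fact that colimits commute with colimits.
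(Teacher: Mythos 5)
Your argument is correct and is essentially the paper's own proof: the paper likewise rewrites $\DC_M(B)$ via the copower formula $\B(M(A),B)\cdot M(A)$ (after reducing to the one-object case, which plays the role of your outer-coproduct step) and chains the same three facts — finite presentability of $M(A)$, copowers commuting with colimits, and coproducts commuting with colimits. Your extra remark about matching the canonical comparison morphism with the constructed isomorphism is a welcome precision but does not change the substance.
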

\begin{proof}[Proof (by courtesy of an anonymous referee).]
    It is enough to check the proof for $M\colon \one \to \B$ where $\one = \{\star\}$ is the discrete category with one object. Indeed, the density comonad for any $\A \to \B$ with $\A$ discrete is computed as a coproduct of density comonads for individual restrictions $\one \to \B$ and coproducts commute with colimits. Then, for an $M\colon \one \to \B$, the density comonad is
    \[ \DC_M(X) = \B(M(\star), X) \cdot M(\star) \]
    where $\cdot$ denotes copower. Consequently, since $M(\star)$ is finitary in $\B$ copowers commute with colimits, for a (small) directed diagram  $D\colon I \to \B$ with a colimit $\colim_i D(i)$,
    \begin{align*}
        \DC_M(\colim_i D(i)) &=     \B(M(\star), \colim_i D(i)) \cdot M(\star) \\
                         &\cong (\colim_i \, \B(M(\star), D(i))) \cdot M(\star) \\
                         &\cong \colim_i \ (\B(M(\star), D(i)) \cdot M(\star)) \cong \colim_i \ \DC_M(D(i)).
        \qedhere
    \end{align*}
\end{proof}

Next, we prove a technical lemma, which is (in some sense) a strengthening of Lemma~\ref{l:connected-components}, and which is needed in the proof of Proposition~\ref{p:finite-rank-conditions} below.

\begin{lemma}
    \label{l:connected-coalg-2}
    Assume that all objects in the image of $M$ are connected. Let \(C\) be a component of \(X\) and let \(\xi\colon X\to \DC_M(X)\) be a $\DC_M$-coalgebra. Then, \(C\) can be equipped with a $\DC_M$-coalgebra \(\gamma\colon C\to \DC_M(C)\) such that the inclusion \(\iota_C\colon C\to X\) is a coalgebra homomorphism $(C, \gamma) \to (X, \xi)$.
\end{lemma}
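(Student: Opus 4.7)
The plan is to invoke Lemma~\ref{l:conn-tech} on $\xi$ and the component inclusion $\iota_C\colon C\to X$, which produces an object $A\in \A$ together with morphisms $z\colon C\to M(A)$ and $f\colon M(A)\to X$ satisfying $\iota_C = f\circ z$, $\iota_f = \xi\circ f$, and $\xi\circ \iota_C = \iota_f\circ z$. The key observation is that, once $M(A)$ is \emph{also} assumed to be connected, $z$ turns out to be an isomorphism, so that $C$ is essentially in $\A$; the sought coalgebra $\gamma$ is then the transport along $z$ of the canonical coalgebra $\eta_A\colon M(A)\to \DC_M(M(A))$.

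I establish invertibility of $z$ in three steps. Step (1): since $M(A)$ is connected and $X\cong C+X'$, the map $f$ factors through either $\iota_C$ or $\iota_{X'}$. Factoring through $\iota_{X'}$ is ruled out, since combined with $\iota_C = f\circ z$ it would express $\iota_C\colon C\to C+X'$ as factoring through $\iota_{X'}$, contradicting the coproduct decomposition $\hom(C, C+X') \cong \hom(C,C) + \hom(C, X')$ that follows from connectedness of $C$. Hence $f = \iota_C\circ h$ for some $h\colon M(A)\to C$. Step (2): substituting gives $\iota_C\circ \id_C = \iota_C\circ (h\circ z)$, and injectivity of $u\mapsto \iota_C\circ u$ on $\hom(C,C)$—again by connectedness of $C$—yields $h\circ z = \id_C$. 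Step (3): the equation $\iota_f = \xi\circ f$ rewrites, via $f = \iota_C\circ h$ and $\xi\circ\iota_C = \iota_f\circ z$, as $\iota_f = \iota_f\circ z\circ h$; since $M(A)$ is connected, the same injectivity argument applied to $\iota_f$ now gives $z\circ h = \id_{M(A)}$.

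With $z\colon C\to M(A)$ now invertible with inverse $h$, set $\gamma := \iota_h\circ z\colon C\to \DC_M(C)$, where $\iota_h\colon M(A)\to \DC_M(C)$ is the summand inclusion indexed by $h$. The remaining verifications are short calculations from (DC1)--(DC3): the counit law reads $\epsilon_C\circ \gamma = h\circ z = \id_C$; the coalgebra-morphism condition $\DC_M(\iota_C)\circ \gamma = \xi\circ \iota_C$ becomes $\iota_{\iota_C\circ h}\circ z = \iota_f\circ z$, which follows from $f = \iota_C\circ h$ together with Lemma~\ref{l:conn-tech}; and the comultiplication law reduces to $\iota_{\iota_h}\circ z = \iota_{\iota_h\circ z\circ h}\circ z$, which holds since $\iota_h\circ z\circ h = \iota_h$ by step (3). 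The main obstacle is precisely step (3): the first two steps exploit only the interaction of connectedness with coproducts, while inverting $z$ genuinely requires the square coalgebra axiom, which enters through the identity $\iota_f = \xi\circ f$ delivered by Lemma~\ref{l:conn-tech}.
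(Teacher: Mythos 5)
Your proof is correct, and while it shares the paper's overall skeleton --- invoke Lemma~\ref{l:conn-tech}, show that $z\colon C \to M(A)$ is an isomorphism, and transport $\eta_A$ along it (your $\gamma = \iota_h \circ z$ is exactly $\DC_M(z^{-1})\circ\eta_A\circ z$, and your verification of the coalgebra-morphism square for $\iota_C$ matches the paper's observation that $\DC_M(f)\circ\eta_A = \iota_f = \xi\circ f$) --- the mechanism by which you invert $z$ is genuinely different. The paper notes that $\iota_f$ is a monomorphism (a \decomposable{}-category hypothesis on $\B$), deduces that $f$ is monic from $\iota_f = \xi\circ f$, and then applies Lemma~\ref{l:component-factorisation}, whose proof also decomposes $X$ into connected summands. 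You instead produce a two-sided inverse $h$ for $z$ directly from the uniqueness of factorisations through coproduct inclusions: connectedness of $C$ gives $h\circ z = \id_C$, and connectedness of $M(A)$ combined with the identity $\iota_f = \xi\circ f$ gives $z\circ h = \id_{M(A)}$. What this buys is that your argument uses neither Lemma~\ref{l:component-factorisation} nor the assumption that coproduct inclusions are monomorphisms, so it goes through without assuming $\B$ is \decomposable{} --- which in fact substantiates more directly the Remark following Theorem~\ref{t:categ-classification}, where the paper claims that connectedness of the objects in the image of $M$ lets one drop the \decomposable{}ity hypothesis on $\B$. The paper's version is shorter given that Lemma~\ref{l:component-factorisation} is already in place, and your closing observation is accurate: the only place the square coalgebra axiom is genuinely needed is in establishing $\iota_f = \xi\circ f$, which drives your step~(3).
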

\begin{prf}
    As in the proof of Lemma~\ref{l:connected-components}, we arrive at the diagram \eqref{eq:comp-selection}.
    This time \(M(A)\) is connected. Therefore, since $f$ is a monomorphism (because so is $\iota_f$), by  Lemma~\ref{l:component-factorisation}, $z$ is an isomorphism. Next we show that $f$ is a coalgebra morphism from $\eta_A\colon M(A) \to \DC_M(M(A))$ to $\xi\colon X \to \DC_M(X)$. To this end, recall that $\eta_A = \iota_g$ for $g = \id\colon M(A) \to M(A)$. Therefore, we obtain the desired $\DC_M(f) \circ \eta_A = \iota_{f \circ \id} = \iota_f = \xi \circ f$ by (DC1).
Consequently, $\iota_C$ is also a coalgebra morphism, for $C$ equipped with the coalgebra structure of $\eta_A$ transported along the isomorphism~$z$.
\end{prf}

Lastly, we show that also conditions (2) and (3) are satisfied for discrete density comonads.

\begin{proposition}
    \label{p:finite-rank-conditions}
    Assume that all objects in the image of $M$ are connected and finitely presentable, and that $\B$ is a \decomposable{} category with finite coproducts.
    Then, every morphism \(f\colon A\to U(\xi)\) in \(\B\), for finitely presentable $A$ in $\B$, admits a unique factorisation (up to isomorphism)
\[ f \ = \ A \xrightarrow{ \ g \ } U(\xi_0) \xrightarrow{ \ U(\gamma) \ } U(\xi) \]
where \(U(\xi_0)\) is finitely presentable.
\end{proposition}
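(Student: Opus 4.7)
The plan is to leverage the \decomposable{} structure of $\B$ to express $\xi$ as a coproduct of coalgebras on connected components coming from the image of $M$, and then use finite presentability of $A$ to factor $f$ through a finite subcoproduct. The factorisation will fall out of a standard filtered-colimit argument, and essential uniqueness will be automatic once I verify that the comparison morphisms are monomorphisms.

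Concretely, I would first use the \decomposable{} hypothesis to write $U(\xi) \cong \coprod_{i\in I} C_i$ with each $C_i$ connected. Lemma~\ref{l:connected-coalg-2} then equips each $C_i$ with a $\DC_M$-coalgebra $\gamma_i$ so that the component inclusion $(C_i,\gamma_i)\to \xi$ is a coalgebra morphism; inspection of its proof further shows $C_i \cong M(A_i)$ for some $A_i\in\A$, whence $C_i$ is finitely presentable by the hypothesis on $M$. For every finite $I_0\subseteq I$ the coproduct $X_{I_0}:=\coprod_{i\in I_0}C_i$ inherits the obvious coalgebra structure $\xi_{I_0}$ (coproducts of coalgebras exist whenever they do in the base), and the canonical morphism $\gamma_{I_0}\colon \xi_{I_0}\to\xi$ is a coalgebra morphism whose underlying map in $\B$ is a subcoproduct inclusion. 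Moreover $X_{I_0}$, as a finite coproduct of finitely presentable objects, is itself finitely presentable.

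Now $U(\xi) = \colim_{I_0} X_{I_0}$ is a filtered colimit over the directed poset of finite subsets of $I$, so finite presentability of $A$ yields a finite $I_0$ and $g\colon A\to X_{I_0}$ with $f = U(\gamma_{I_0})\circ g$, giving the required factorisation. For essential uniqueness it suffices, as observed in the remark preceding the proposition, to check that $U(\gamma_{I_0})$ is a monomorphism; if a second factorisation $f = U(\gamma')\circ g'$ is given through some finitely presentable $\xi_0'$, both $g$ and $g'$ can be absorbed into a common enlargement $\xi_{I_0\cup I_0'}$ by the filtered-colimit property, and the mono $U(\gamma_{I_0\cup I_0'})$ then forces the comparison maps to agree. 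The main technical point that I expect to be the most delicate is verifying that the subcoproduct inclusion $\coprod_{I_0}C_i \hookrightarrow \coprod_I C_i$ really is a monomorphism from the bare axioms of a \decomposable{} category: this should follow by decomposing the source of any candidate pair of parallel morphisms into its connected components and reducing to the uniqueness of the factoring of a connected object through a coproduct, combined with the fact that individual component inclusions are monic by hypothesis.
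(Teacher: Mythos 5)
Your proposal is correct and follows essentially the same route as the paper's proof: decompose $U(\xi)$ into connected components, use Lemma~\ref{l:connected-coalg-2} to equip each component (essentially in $\A$, hence finitely presentable) with a coalgebra structure, present $U(\xi)$ as the directed colimit of finite subcoproducts, factor $f$ through one of them by finite presentability of $A$, and get uniqueness from the subcoproduct inclusion being monic. The only difference is cosmetic --- the paper invokes Theorem~\ref{t:categ-classification} where you inspect the proof of Lemma~\ref{l:connected-coalg-2} --- and your flagging of the verification that subcoproduct inclusions are monomorphisms is a point the paper passes over more quickly.
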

\begin{prf}
    In the following, we denote by \(U\) the forgetful functor $U^\C\colon \EM(\DC_M) \to \B$.
    By Theorem~\ref{t:categ-classification}, we may assume that the underlying object \(X\) of \(\xi\colon X\to \DC_M(X)\) is a coproduct \(\coprod_{i\in I} C_i\) of a collection of connected, finitely presentable objects~\(C_i\) essentially in \(\A\).

Recall that \(\coprod_{i\in I} C_i\) is isomorphic to the directed colimit of the following directed diagram
\[ \{ \coprod_{i\in F} C_i \mid F \text{ is a finite subset of } I\} \]
with the obvious morphisms between these finite coproducts. Since \(A\) is finitely presentable, \(f\colon A \to X\) decomposes as
\[ f = A \xrightarrow{g} \coprod_{i\in F} C_i \xrightarrow{\iota_F} X \]
for some finite \(F\subseteq I\).
    By Lemma~\ref{l:connected-coalg-2}, for each \(i\in F\),
    the inclusion morphism \(\iota_i\colon C_i \to X\) is a coalgebra morphism \((C_i, \xi_i) \to (X,\xi)\), for some comonad coalgebra \(\xi_i\colon C_i \to \DC_M(C_i)\). 

    Lastly, because the forgetful functor \(U\colon \EM(\DC_M) \to \B\) creates colimits (see e.g.\ Proposition 20.12 in~\cite{AHS1990}), \(\coprod_{i\in F} C_i\) can be equipped with the coalgebra structure of the coproduct of the coalgebras \(\xi_i\colon C_i \to \DC_M(C_j)\). Moreover, the morphism $\iota_F\colon \coprod_{i\in F} C_i \to X$ is a coalgebra morphism because each of its components is. Also, \(\coprod_{i\in F} C_j\) is finitely presentable because it is a finite coproduct of finitely presentable objects (see e.g.\ Proposition 1.3 in~\cite{adamekrosicky1994locally}). Finally, \(g\) is unique because \(\B\) is a \decomposable{} category and \(\iota_F\) is the inclusion morphism of $\coprod_F C_i$ into the coproduct $\coprod_I C_i$ and hence a monomorphism.
\end{prf}

As a corollary of Propositions~\ref{p:finitary-comonad} and \ref{p:finite-rank-conditions} we obtain the main theorem of this section.

\begin{theorem}
    \label{t:finite-rank}
    Let $M\colon \A \to \B$ be a functor from a discrete category $\A$ to a \decomposable{} category $\B$ with finite coproducts and assume that the pointwise density comonad $\DC_M$ for $M$ exists. If all objects in the image of $M$ are connected and finitely presentable then $\DC_M$ has finite rank.
\end{theorem}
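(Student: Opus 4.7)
The plan is straightforward: Theorem~\ref{t:finite-rank} bundles together the two preceding propositions, one for each of the first two clauses of finite rank, and invokes a general remark for the third clause.

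First I would dispatch condition (1), that the underlying endofunctor of $\DC_M$ preserves filtered colimits. This is precisely the conclusion of Proposition~\ref{p:finitary-comonad}, whose sole hypothesis — finite presentability of every object in the image of $M$ — is one of the standing hypotheses of the present theorem, so it applies directly.

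Next I would establish condition (2), the existence of a factorisation $f = U(\gamma) \circ f_0$ through a coalgebra $\xi_0$ with $U(\xi_0)$ finitely presentable, for every $f\colon C \to U(\xi)$ with $C$ finitely presentable. This is exactly the content of Proposition~\ref{p:finite-rank-conditions}, whose hypotheses (componental $\B$ with finite coproducts, image of $M$ consisting of connected finitely presentable objects) coincide with those of Theorem~\ref{t:finite-rank}. So condition (2) follows by direct application.

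Finally, for condition (3) (essential uniqueness of this factorisation) I would appeal to the observation made immediately after the definition of finite rank: when $U(\gamma)$ is a monomorphism, essential uniqueness is automatic. Inspecting the factorisation constructed in the proof of Proposition~\ref{p:finite-rank-conditions}, the morphism $U(\gamma)$ is the coproduct inclusion $\iota_F\colon \coprod_{i\in F} C_i \to \coprod_{i\in I} C_i$ of a sub-coproduct of connected components into the full coproduct decomposition of the underlying object $X$ of $\xi$ in $\B$. Since $\B$ is componental, all such coproduct inclusions are monomorphisms by definition, so $U(\gamma)$ is mono and condition (3) holds trivially (in fact, Proposition~\ref{p:finite-rank-conditions} gives genuine uniqueness up to isomorphism, which is strictly stronger than essential uniqueness). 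Combining the three conditions yields that $\DC_M$ has finite rank. The only real step here is the bookkeeping of matching the conclusions of the two propositions to the three-part definition; the genuine mathematical work was already done in Propositions~\ref{p:finitary-comonad} and~\ref{p:finite-rank-conditions}, so no further obstacle is expected.
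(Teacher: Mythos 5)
Your proposal is correct and follows exactly the route the paper takes: Theorem~\ref{t:finite-rank} is stated there as an immediate corollary of Propositions~\ref{p:finitary-comonad} and~\ref{p:finite-rank-conditions}, with condition (3) discharged by the remark that essential uniqueness is automatic when $U(\gamma)$ is a monomorphism, which holds since $\iota_F$ is a coproduct inclusion in a \decomposable{} category. Your bookkeeping of the three clauses against the two propositions and that remark matches the paper's intended argument.
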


Observe that the constructed functor $M\colon \A \to \B$ in the proof of Theorem~\ref{t:class-classification} (cf.\ the paragraph following Theorem~\ref{t:categ-classification}) automatically satisfies the assumptions of Theorem~\ref{t:finite-rank}. Indeed, $M$ is an inclusion of a class of finite connected structures and finite structures are precisely the finitely presentable objects in the category of relational structures or the category of graphs. Therefore, Theorem~\ref{t:finite-rank} together with Theorem~\ref{t:luca} concludes the proof of Corollary~\ref{c:relation-classification}.

\section{Conclusion}

In this paper we have shown that classes of structures closed under isomorphism, disjoint unions, and summands can always be classified by a comonad, and moreover this comonad admits a Lov\'asz-type theorem. We have also shown that standard graph parameters give rise to graded comonads, i.e.\ sequences of comonads indexed by real numbers, such that the graph parameter is captured by the coalgebra number of a given structure. Both results cover a huge range of examples of structure classes and graph parameters from the literature.

The comonads we construct are, in some sense, the minimal solutions to this problem in that they are weakly initial among the comonads that classify the same class of structures. Our proofs show is that the classifying comonads (or graded comonads) can be very simple and do not need to be specifically tailored for the concept at hand.

Conversely, 
however, the power of game comonads is that they shed light on previously known constructions and reveal new connections between them. 
This can lead to new results. For example, the links between game comonads, logic fragments and combinatorial parameters established in \cite{abramskydawarwang2017pebbling,abramskyshah2021relating} are leveraged in  \cite{jaklmarsdenshah2021mso} and  \cite{montacuteshah2021pebble} to obtain new results for other combinatorial properties simply by changing the comonad at hand.

In \cite{abramsky2021arboreal} the common structure exhibited by game comonads is axiomatised in terms of \emph{arboreal categories} and \emph{arboreal covers}.
This suggests one line of further development, by relating the general results using discrete density comonads of the present paper to the axiomatic setting of \cite{abramsky2021arboreal}.
This can provide a basis for general transfer results of this kind between comonads arising from arboreal covers.

Another potential source of useful comonads is when a particular class of structures is given by a \emph{construction}, similar to the inductive definition of clique-width or the algebraic definition of planar graphs found e.g.\ in \cite{manvcinskaroberson2020quantum}. We hope to explore comonads arising from inductive constructions in  future work.

\section*{Acknowledgements}
We thank the referees for careful reading and many useful suggestions for improvements.
SA thanks Miko\l{}aj Boja\'nczyk and Bartek Klin for very stimulating and enjoyable discussions in Warsaw in 2018 on finding a comonad to classify clique-width, which led to some initial results.
TJ thanks Anuj Dawar who asked for a comonad for co-spectrality, which was another stimulus for this research.
We  thank all the members
of the EPSRC project on Resources and Coresources for their encouragement and feedback.

\bibliographystyle{amsplain}
\bibliography{refs}

\appendix

\section{Omitted Proofs}

\connectedCoalg*
\begin{prf}
    Let $C\in \B$ be connected.
The left-to-right implication follows immediately from the fact that, $M\fl(A)$ is a coalgebra on $M(A)$, for every $A\in \A$ (cf.\ \eqref{eq:M-fl}). For the right-to-left implication, let $\beta\colon C\to \DC_M(C)$ be a coalgebra. Since $C$ is connected,
\[ \beta\colon C \to \coprod_{A\in \A} \ \coprod_{f\colon M(A) \to C} M(A) \]
    uniquely factors as $\beta = \iota_f \circ \beta_0$, for some $\beta_0\colon C \to M(A)$ and $f\colon M(A) \to C$. By Lemma~\ref{l:conn-tech}, the following diagram commutes.
    \[
        \begin{tikzcd}
            C \rar{\id}\dar[swap]{\beta_0} & C \dar{\beta} \\
            M(A) \rar{\iota_f}\ar[pos=0.40]{ru}{f} & \DC_M(C)
        \end{tikzcd}
    \]
    One triangle immediately gives $f \circ \beta_0 = \id$ and the other triangle together with $\beta = \iota_f \circ \beta_0$ and the fact that inclusions are monomorphisms (in \decomposable{} categories) entails $\beta_0 \circ f = \id$.
\end{prf}

\initialMorphism*
\begin{prf}
First, let us check commutativity of the triangle diagram in \eqref{e:comonad-morphisms}. By \eqref{eq:eta-comonad}, $\epsilon^\D M \circ \eta = \id$. Similarly, by \eqref{e:varphis}, $\epsilon^\C \circ \varphi^* \circ \eta = \epsilon^\C \circ \varphi = \id$ where the last equality follows from commutativity of the triangle law in \eqref{e:coalg} for every individual coalgebra $\varphi_A\colon M(A) \to \C(M(A))$. We see that $\epsilon^\D M \circ \eta = (\epsilon^\C \circ \varphi^*) M \circ \eta$ from which it follows that $\epsilon^\D = \epsilon^\C \circ \varphi^*$ by the universal property of $\eta$.

Next, we check the oblong law in \eqref{e:comonad-morphisms}. The composition $\delta^\C M \circ \varphi^* M \circ \eta$ is equal to $\delta^\C M \circ \varphi$, by \eqref{e:varphis}, which is equal to $\C \varphi \circ \varphi$ by the square law of coalgebras \eqref{e:coalg}. Similarly,
\begin{align*}
\varphi^* \C M \circ \D \varphi^* M \circ \delta^\D M \circ \eta
&= \varphi^* \C M \circ \D \varphi^* M \circ \D \eta \circ \eta \\
&= \varphi^* \C M \circ \D \varphi \circ \eta \\
&= \C \varphi \circ \varphi^* M \circ \eta  \\
&= \C \varphi \circ \varphi
\end{align*}
The first equality follows from \eqref{eq:eta-comonad}, the second and fourth from \eqref{e:varphis} and the third is naturality of $\varphi^*$. We have shown that $(\varphi^* \C \circ \D \varphi^* \circ \delta^\D) M \circ \eta$ is equal to $(\delta^\C \circ \varphi^*) M \circ \eta$ which, by the universal property of $\eta$, implies that $\varphi^* \C \circ \D \varphi^* \circ \delta^\D = \delta^\C \circ \varphi^*$.
\end{prf}

\end{document}